\documentclass[a4paper,11pt]{article}

% =============================================================================
%  Language configuration
% =============================================================================
\usepackage{a4wide}
\usepackage[utf8]{inputenc}
\usepackage[T1]{fontenc}
\usepackage[english]{babel}
\usepackage{amsmath}
\usepackage{amssymb}
\usepackage{url}
\usepackage{multirow}
\usepackage{amsfonts}
\usepackage{amsthm}
\usepackage{indentfirst}
\usepackage{bbm}
\usepackage{color}
\usepackage{bigints}

\numberwithin{equation}{section}

% =============================================================================
%  Graphics
% =============================================================================

\usepackage{graphicx}
\usepackage{tikz}
\usepackage{float}
%\graphicspath{{./}{figure/}}
% =============================================================================
%   Mathematics
% =============================================================================

%\usepackage{amsmath}
%\usepackage{amsfonts}
%\usepackage[english]{babel}
%\usepackage{amssymb}

% =============================================================================
%  New commands
% =============================================================================

\newtheorem{theorem}{Theorem}[section]

\newtheorem{example}[theorem]{Example}
\newtheorem{remark}[theorem]{Remark}
%\numberwithin{equation}{section}

\newcommand{\ii}{{\mathrm{i}}}

\newcommand{\phib}{\boldsymbol\phib}

\def\C{\mathcal{C}}

\def\Xint#1{\mathchoice
   {\XXint\displaystyle\textstyle{#1}}%
   {\XXint\textstyle\scriptstyle{#1}}%
   {\XXint\scriptstyle\scriptscriptstyle{#1}}%
   {\XXint\scriptscriptstyle\scriptscriptstyle{#1}}%
   \!\int}
\def\XXint#1#2#3{{\setbox0=\hbox{$#1{#2#3}{\int}$}
     \vcenter{\hbox{$#2#3$}}\kern-.5\wd0}}

\def\dashint{\Xint-}

    \graphicspath{{figures/}}
\date{\today}

\begin{document}
\title{Approximation of the Hilbert Transform on the unit circle}

\author{ 
Luisa Fermo\thanks{Department of Mathematics and Computer Science, University of Cagliari,  Via Ospedale 72, 09124, Italy (\tt fermo@unica.it)} \thanks{Corresponding author} \and
Valerio Loi\thanks{Department of Science and High Technology, Insubria University, Via Valleggio 11, Como, 22100, Italy (\tt vloi@studenti.uninsubria.it)}}

\maketitle

\begin{abstract}
The paper deals with the numerical approximation of the Hilbert transform on the unit circle using Szeg\"o and anti-Szeg\"o  quadrature formulas. These schemes exhibit maximum  precision with oppositely signed errors and allow for improved accuracy through their averaged results. Their computation involves a free parameter associated with the corresponding para-orthogonal polynomials. Here, it is suitably chosen to construct a Szeg\"o and anti-Szeg\"o formula whose nodes  are  
strategically distanced from the singularity of the Hilbert kernel.
Numerical experiments demonstrate the accuracy of the proposed method.

\medskip
\noindent{\bf Keywords}: Hilbert transform, Cauchy principal value integrals, Szeg\"o quadrature rule, anti-Szeg\"o quadrature formula
\medskip

\noindent{\bf Mathematics Subject Classification:} 65D30, 42A10, 30E20, 32A55  
\end{abstract}

\section{Introduction}
The present paper aims to compute the Hilbert transform which frequently arises in physics and engineering problems such as in signal analysis \cite{Marple1999}, in the Benjamin-Ono
equation \cite{Benjamin1967,Ono1975}, in the mechanical vibration/diagnostics [5,6], and in the solution to matrix-valued Riemann-Hilbert problems \cite{Olver2011}. It has the form
\begin{equation}\label{Cauchy}
(If)(z)= \frac{1}{\pi} \dashint_{\Gamma} \frac{f(t)}{t-z} dt, \qquad z \in \Gamma,
\end{equation}
where the integral is understood in the Cauchy principal value sense, $\Gamma$ is the unit circle in $\mathbb{C}$, i.e.,
$\Gamma=e^{\ii {\mathbb{T}}} = \{ z \in \mathbb{C} : |z|=1\}$, $\ii=\sqrt{-1}$,  $\mathbb{T}=[-\pi,\pi],$
and $f: \Gamma \to \mathbb{C}$ is a known function. 

It is well known \cite[p.104]{Kress99} that, setting $t=e^{\ii \theta}$ and $z=e^{\ii \phi}$ with $\theta,\phi \in \mathbb{T}$, by virtue of
\begin{align*}
\frac{\ii e^{\ii \theta}}{e^{\ii \theta}-e^{\ii \phi}}= \ii \left[
\frac{e^{ (\frac{\theta-\phi}{2}) \ii}}{e^{ (\frac{\theta-\phi}{2}) \ii}-e^{ -(\frac{\theta-\phi}{2}) \ii}} \right]=\frac{1}{2} \cot{\left(\frac{\theta-\phi}{2} \right)+\frac{\ii}{2}},
\end{align*}
one gets
\begin{align}\label{If2}
(If)(e^{\ii \phi})&= \frac{1}{2 \pi} \dashint_{-\pi}^\pi  
\cot{\left(\frac{\theta-\phi}{2} \right)} f(e^{\ii \theta}) d \theta  + \frac{\ii}{2\pi} \int_{-\pi}^\pi  f(e^{\ii \theta}) d \theta, \quad \phi \in \mathbb{T}.
\end{align}

Obviously, from the numerical point of view, the first integral
\begin{equation}\label{Hilbert}
(Hf)(\phi)=\frac{1}{2 \pi} \dashint_{-\pi}^\pi  
\cot{\left(\frac{\theta-\phi}{2} \right)} f(e^{\ii \theta}) d \theta , \quad \phi \in \mathbb{T},
\end{equation}
is the most delicate because of the presence of a singular kernel, known as the Hilbert kernel. Sometimes, one refers to integral \eqref{Hilbert} as circular Hilbert transform and for its numerical approximation practical and accurate numerical methods have been developed; see, for instance,  \cite{chawla1974,jin1988,jinyuan1998,micchelli2013,Olver2011,Sun2019}.

In this paper, we propose to evaluate \eqref{Hilbert} by approximating the integral through the Szeg\"o and anti-Szeg\"o formulas, appropriately constructed to treat the singularity of the Hilbert kernel. 
Szeg\"o rules are commonly applied to integrate periodic functions on the unit circle in the complex plane. A $n$-point Szeg\"o quadrature rule is an interpolatory scheme that integrates exactly Laurent polynomials of degree at most $n-1$, the high degree as possible \cite{jones}. The coefficients of such rules are positive and the 
nodes are simple, lie on the unit circle, and are the zeros of the para-orthogonal polynomials associated with the measure of the integral. Moreover, both can be easily computed: the nodes are the eigenvalues of a unitary upper Hessenberg matrix and the coefficients are the magnitude squared of the first component of the eigenvectors \cite{gragg}.  Thus, Szeg\"o formulae are very similar to the Gauss-Christoffel rules \cite{Gauss,Gautschi}: the role of algebraic polynomials, orthogonal polynomials, and Jacobi matrix is played by Laurent polynomials, para-orthogonal polynomials, and Hessenberg matrix, respectively. As a related problem, it is interesting to observe that the study of the node distribution and localization has also been carried out under various assumptions and using tools from asymptotic linear algebra as in \cite{Golinskii2007,Kuijla2001,Tyrty2003}; for the case of Hessenberg structures the analysis in more involved but still manageable as proved in \cite{bogoya2024,coussement2008}. The main difference between Szeg\"o and Gauss-Christoffel rules is that para-orthogonal polynomials are characterized by a free parameter $\tau$ having a unitary modulus. As shown in \cite{Bulthell2001}, for specific values of $\tau$, Szeg\"o rules coincide with some Gauss-Christoffell formula in $[-1,1]$ and one can fix $\tau$ so that the rule has one or two fixed node in the set $\{\pm 1\}$, as described in \cite{Jagels2007}. 
Anti-Szeg\"o rules are introduced in \cite{KimReichel}. They are similar to the anti-Gauss rule developed by Laurie \cite{Laurie1} and are constructed so that the quadrature error is a negative multiple of the integration error provided by the Szeg\"o rule. An interesting result given in \cite[Theorem 3.4]{KimReichel} design the choice of the parameter $\tau$ to construct the $n$-point anti-Szeg\"o rule. Pairs of Szeg\"o and anti-Szeg\"o rule bracket the integral and this implies that a convex combination of the two rules produces a new rule that is more accurate.  

This paper contains at first new results concerning the anti-Szeg\"o formula. In fact, we show that, similarly to the Szeg\"o rule,  a connection between the anti-Szeg\"o formula  and the anti-Gauss rule based on Chebychev nodes in $[-1,1]$ occurs. Then, we propose new quadrature schemes to approximate the integral \eqref{Hilbert}. They are rules of Szeg\"o and anti-Szeg\"o type having a prescribed node that depends on the singularity $\phi$ of the Hilbert kernel so that the two rules have simultaneously zeros sufficiently far from the singularity. We prove the pointwise stability of the two rules, except for a factor $\log{n}$, and provide an estimate for the quadrature errors. For both results, the study of the punctual behavior of the Hilbert transform for continuous functions has been crucial.  Concerning the quadrature error, we give an estimate in terms of the error of best polynomial approximation of the integrand function, getting an optimal order of convergence in the case when the integrand is a function of the Sobolev space, in accordance with \cite{stolle1992}.

The paper is organized as follows. Section \ref{sec:spaces} outlines the spaces in which we will study the integral \eqref{Cauchy}. Section \ref{sec:schemes} focuses on the quadrature formulae we need for the approximation of the transform \eqref{Cauchy} by proving also new results for the anti-Szeg\"o rule. Section \ref{sec:method} contains the algorithm we propose to approximate the Hilbert transform \eqref{Hilbert} and Section \ref{sec:tests} shows its performance through some numerical examples. Finally, in Section \ref{sec:conclusions} we draw some conclusions and briefly sketch possible research perspectives.

\section{Function spaces}\label{sec:spaces}
Let $C(\Gamma)$ be the set of all continuous functions on the unit circle such that
$$ \|f\|_\infty =\sup_{x \in \Gamma}|f(x)|<\infty, $$
and let us also introduce the space $C^0_{2\pi}(\mathbb{T})$ that is the set of all $2\pi$-periodic and continuous functions in $\mathbb{T}=[-\pi,\pi]$ equipped with the norm
$$\|f\|_\infty=\max_{\theta \in \mathbb{T}}|f(\theta)|.$$
It turns out that $C^0_{2\pi}(\mathbb{T}) \equiv C(\Gamma)$ and $C^0_{2\pi}(\mathbb{T}, \| \cdot\|_\infty) \equiv C(\Gamma, \| \cdot\|_\infty)$ is a Banach space.

A basic tool to study the approximation of functions $f \in C^0_{2 \pi}(\mathbb{T})$ is  the $k$-th modulus of smoothness defined as
\begin{equation*}
\omega^k(f,t)=\sup_{0<h \leq t} \max_{x \in I_{hk}}|\Delta_h^k f(x)|,
\end{equation*}
where $I_{hk}=[-\pi+kh,\pi-kh]$, $k \in \mathbb{N}$. $t \in \mathbb{R}^+$, and $$\Delta_h f(x)=f(x+h)-f(x), \qquad \Delta^k_h f = \Delta_h (\Delta^{k-1}_h f).$$

For smoother functions, we consider the Sobolev spaces of index $r \geq 1$
$$W^r= \{f \in C^0_{2 \pi}(\mathbb{T}): \, f^{(r-1)} \in AC, \quad \|f^{(r)}\|_\infty < \infty \},$$
where $AC$ denotes the set of all functions that are absolutely continuous on every closed subinterval of $(-\pi, \pi)$.
We endow this space by the norm
$$\|f\|_{W^r}=\|f\|_\infty + \|f^{(r)}\|_\infty.$$
Functions belonging to Sobolev spaces are also useful to define the $K$-functional defined as
$$K^k(f,t)=\inf_{g \in W^k} \left \{\|f-g\|_\infty+t^k \|g^{(k)}\|_\infty \right\},$$
which is also fruitful for characterizing the smoothness of a function $f \in C^0_{2 \pi}(\mathbb{T})$, according to the following result
\begin{equation}\label{estimateK}
\mathcal{C}^{-1} \omega^k(f,t) \leq K^k(f,t) \leq \mathcal{C} \omega^k(f,t),
\end{equation}
where $\mathcal{C}$ is a positive constant depending only on $k$.

Let us also introduce the space $L^2(\Gamma)$ as the set of all measurable functions $f$ on the unit circle $\Gamma$ such that
$$\|f\|_{L^2(\Gamma)}= \sqrt{\langle f, f \rangle}< \infty, $$
with the inner product defined as
$$\langle f,g \rangle = \frac{1}{2\pi} \int_{-\pi}^\pi \overline{f(e^{\ii \theta})}  g(e^{\ii \theta}) d \theta,$$
where the bar denotes complex conjugation.

Let us note that if $f: \Gamma \to \mathbb{C}$ is a continuous function, then there exists \cite[Theorem 6]{Bulthell1991} a polynomial $L_n \in \Lambda_{-n,n}$ 
where $\Lambda_{-n,n}$ denotes the linear space of Laurent polynomials of the form 
\begin{equation}\label{Laurent}
L_{n}(z)=\sum_{k=-n}^n c_k \, z^k, \qquad c_k, z \in \mathbb{C},
\end{equation} 
so that
$$|f(z)-L_n(z)|< \epsilon, \qquad \epsilon>0. $$

This is a consequence of the classical Weierstrass Approximation Theorem and suggests us to define the error of best polynomial approximation for $f \in C(\Gamma)$ as
$$E_n(f)= \inf_{L_n \in \Lambda_{-n,n}} \|f-L_n\|_\infty.$$

Let us also recall that Laurent polynomials can be identified with trigonometric polynomials $t_n$, after the change of variable $z=e^{\ii \theta}$ in \eqref{Laurent},

$$t_n(\theta)=\frac{a_0}{2}+\sum_{k=1}^n [a_k \cos{(k \theta)}+ b_k \sin{(k \theta)}],$$
with $a_0=2c_0 \in \mathbb{C}$, $a_k=c_k+c_{-k}\in \mathbb{C}$, and $b_k=\ii(c_k-c_{-k})\in \mathbb{C}$, for each $k \geq 1$. Hence, these polynomials are well suited for the approximation of periodic functions. 

\section{Quadrature schemes}\label{sec:schemes}
In this section, we mention the quadrature rule we adopt for our approximation. In the first paragraph, we recall some basic facts of the well-known Szeg\"o rule whereas in the second one, we describe the anti-Szeg\"o rule by showing connection with the classical anti-Gauss quadrature formula associated to the first-kind Chebychev weights. According to our knowledge, this relation is new. Finally, in the third section, we define the more accurate averaged scheme.   

\subsection{Szeg\"o rules}
Let us consider  
\begin{equation}\label{I}
\mathcal{I}(f)= \frac{1}{2 \pi} \int_{-\pi}^\pi f(e^{i\theta}) d\theta.
\end{equation}
The $n$-point Szeg\"o quadrature rule applied to the above integral yields
\begin{equation}\label{szego}
\mathcal{I}(f)= \frac{1}{n}\sum_{k=1}^n f(z_k) +e_n(f):=S_n(f)+e_n(f),
\end{equation} 
where $z_k \in \Gamma$ and $e_n(f)$ is the remainder term.  

It is well known that the Szeg\"o rule integrates exactly Laurent polynomials of degree at most $n-1$, i.e. 
\begin{equation}\label{exactness}
e_n(f)=0, \qquad \forall f \in \Lambda_{-(n-1),n-1},
\end{equation}
and that the quadrature nodes $z_k$ are the zeros of the following para-orthogonal polynomial \cite{gragg,jones} 
\begin{equation}\label{polinomio}
{\psi}_n(z;\tau) = z^n+\tau, \qquad n=0,1,\dots
\end{equation}
where $\tau \in \Gamma$ is arbitrary but fixed. They are given by 
$$z_k=e^{\ii (\theta+2k\pi)/n}, \qquad \tau=-e^{\ii \theta}, \quad k=1,\dots,n $$
and are the eigenvalues of the following unitary upper Hessenberg matrix 
\begin{equation*} 
{H}_{n}(\tau) =
\begin{bmatrix}
\mathbf{0}^T_{n-1} & -\tau \\
I_{n-1} & \mathbf{0}_{n-1} \\
\end{bmatrix} \in \mathbb{R}^{n \times n},
\end{equation*}
where $I_k$ is the identity matrix of order $k$ and $\mathbf{0}^T_{k}$ is the null vector of order $k$, with $k=1,2,\dots$.  For completeness, we also recall that the weights of the   quadrature scheme \eqref{szego}, i.e., $ \lambda_k:=\frac{1}{n}$ are related to the matrix $H_n$. In fact, they are the magnitude squared of the first component of its eigenvectors, normalized to have unit length.

If in \eqref{polinomio} we fix $\tau=1$, Szeg\"o quadrature formula \eqref{szego} is strictly related  to the classical Gauss-Chebychev  rule 
$$\mathcal{G}_n(F)= \frac{1}{2 n} \sum_{j=1}^n  F(x_j), \quad x_j= \cos \left( \frac{2j-1}{2n}\pi\right), \quad  $$
for approximating integrals of the type $$I(F)=\frac{1}{2 \pi}\int_{-1}^1 F(x) \frac{dx}{\sqrt{1-x^2}}, $$
where $f(e^{\ii \theta})=\frac{1}{2} F\left( \frac{e^{\ii \theta}+e^{-\ii \theta}}{2}\right)$. In fact, it is well known that
$\mathcal{G}_n(F)=I(F),$ $\forall F \in \mathbb{P}_{2n-1},$
where $\mathbb{P}_{2n-1}$ denotes the set of all polynomials of degree at most $2n-1$. Then, by using this property, setting $x_j=\cos{\theta_j}$, and defining 
$z_j=e^{\ii \theta_j}$, $z_{n+j}=e^{-\ii \theta_j}$ for $j=1,...,n,$
it is possible to prove that the formula $\mathcal{G}_n(F)$
coincides with the $2n$-point Szeg\"o quadrature formula $S_{2n}(f)$ for the integral \eqref{I}; see  \cite[Theorem 3.2 and Theorem 3.3]{Bulthell2001}.

For our aim, let us also recall that it is possible to construct Szeg\"o rule with a prescribed node $z_p=e^{\ii \theta_p} \in \Gamma$, leading to the so-called Szeg\"o-Radau scheme. As described in \cite{Jagels2007}, this is possible simply by fixing in \eqref{polinomio}
\begin{equation}\label{taup}
\tau:=- z_p \frac{\phi_{n-1}(z_p)}{z_p^{n-1}\overline{\phi_{n-1}(z_p)}}=-e^{\ii n \theta_p}, \qquad \text{with} \qquad \phi_{n-1}(z)=z^{n-1}.
\end{equation}

Let us now introduce some notation which will be useful in the sequel. Given two distinct nodes $u, w \in \Gamma$, we introduce the set 
$$(u,w)=\{ \phi \in \Gamma : \arg(u)<\arg(\phi)<\arg(w) \}.$$

Then, we assume that the set of quadrature nodes $\{ z_1,z_2,\dots z_n\} \in \Gamma$ is cyclicly ordered that is the set of points $(z_i, z_{i+1})_{i=1}^{n-1}$ and $(z_n,z_1)$ does not contain other  $z_i$. 

Moreover, we will say that the set $\{ z_1,z_2,\dots z_n\} \in \Gamma$ strictly interlace the set of points $\{\tilde{z}_1,\tilde{z}_2,\dots \tilde{z}_n\} \in \Gamma$ if after a cyclic permutation of $\tilde{z}_i$
we have $\tilde{z}_i \in (z_i,z_{i+1})$ with $i=1,2,\dots,n-1$, and $\tilde{z}_n \in (z_n,z_{1})$. Obviously, one also has that ${z}_i \in (\tilde{z}_i,\tilde{z}_{i+1})$ with $i=1,2,\dots,n-1$, and ${z}_n \in (\tilde{z}_n,\tilde{z}_{1})$.

The next theorem states an important property of the polynomial \eqref{polinomio} which in general holds true for all para-orthogonal polynomials
\cite{Simon2007}.
\begin{theorem}\label{interlacing}
The $n$ zeros of the polynomial $\hat{\psi}_n(z;\tau)$ strictly interlace the zeros of the polynomial $\hat{\psi}_n(z;-\tau)$.
\end{theorem}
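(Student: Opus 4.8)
The plan is to avoid the general para-orthogonal machinery of \cite{Simon2007} and instead exploit the completely explicit form of the zeros for the polynomial \eqref{polinomio}. Writing $\tau=e^{\ii\alpha}$ with $\alpha\in\mathbb{T}$, the zeros of $\hat\psi_n(z;\tau)$ solve $z^n=-\tau=e^{\ii(\alpha+\pi)}$, hence
\[
z_k=e^{\ii(\alpha+\pi+2k\pi)/n},\qquad k=0,1,\dots,n-1,
\]
whereas the zeros of $\hat\psi_n(z;-\tau)$ solve $z^n=\tau=e^{\ii\alpha}$, giving
\[
\tilde z_k=e^{\ii(\alpha+2k\pi)/n},\qquad k=0,1,\dots,n-1.
\]
Each family therefore consists of $n$ equally spaced points on $\Gamma$ with common angular gap $2\pi/n$.

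The key observation is that the arguments of the two families differ by exactly $\pi/n$, i.e. by half the common gap: the node $z_k$ has argument $\pi/n$ larger than $\tilde z_k$. Consequently every node of one family lands precisely at the midpoint of the arc joining two consecutive nodes of the other. This is exactly the strict interlacing asserted in the theorem, interpreted through the cyclic ordering and the set notation $(u,w)$ fixed just before the statement. Because the offset $\pi/n$ is strictly smaller than the gap $2\pi/n$ and strictly positive, no node of one family can ever coincide with a node of the other, so the interlacing is automatically \emph{strict}.

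The remaining work is pure bookkeeping with the cyclic indexing. After listing both sets by increasing argument, I would check that a single cyclic relabelling of the $\tilde z_k$ places $\tilde z_i$ in the open arc $(z_i,z_{i+1})$ for $i=1,\dots,n-1$ and $\tilde z_n$ in $(z_n,z_1)$; the symmetric inclusions ${z}_i\in(\tilde z_i,\tilde z_{i+1})$ then follow from the very same $\pi/n$ offset. The only mild obstacle is to fix the direction of this cyclic shift correctly, so that within each gap the offset reads as $+\pi/n$ interior to the arc rather than as a shift onto the neighbouring node; once the gap is $2\pi/n$ and the offset is half of it, this is a one-line verification and the claim follows.
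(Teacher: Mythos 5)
Your argument is correct, and it is genuinely different from what the paper does: the paper offers no proof at all for this statement, instead invoking the general interlacing theorem for zeros of para-orthogonal polynomials $\psi_n(z;\tau)$ and $\psi_n(z;\tau')$, $\tau\neq\tau'$, from \cite{Simon2007}. You instead exploit the fact that for the Lebesgue measure the para-orthogonal polynomials reduce to $z^n+\tau$, so both zero sets are explicit: the zeros of $\psi_n(z;\tau)$ are $n$ equally spaced points with angular gap $2\pi/n$, the zeros of $\psi_n(z;-\tau)$ are the same configuration rotated by $\pi/n$, and since the offset is exactly half the gap each node of one family sits at the midpoint of an arc of the other, which is strict interlacing by the definition fixed before the theorem (this is also consistent with the paper's later remark that $|\arg(z_k)-\arg(\tilde z_k)|=\pi/n$). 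What your route buys is a short, self-contained, completely elementary verification that requires no machinery beyond $n$-th roots; what it loses is generality, since the citation to \cite{Simon2007} covers para-orthogonal polynomials for arbitrary measures on the circle, where no closed form for the zeros is available and the interlacing is a genuinely nontrivial theorem. For the specific polynomial \eqref{polinomio} used in this paper your proof is complete once the final cyclic-relabelling bookkeeping, which you correctly identify as routine, is written out.
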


About the error of Szeg\"o schemes \eqref{szego}, error bounds were given in \cite{Bulthell1991} for analytic functions. 
The next theorem furnishes an estimate for the error $e_n(f)$ of the Szeg\"o scheme \eqref{szego}, in terms of the error of best polynomial approximation.
\begin{theorem}\label{stimaerrore}
Let $f \in C^0_{2 \pi}(\mathbb{T})$. Then
$$|e_n(f)| \leq 2 E_n(f).$$
\end{theorem}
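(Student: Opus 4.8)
The plan is to exploit the exactness property \eqref{exactness} together with the fact that both the exact functional $\mathcal{I}$ and the quadrature functional $S_n$ act on $C^0_{2\pi}(\mathbb{T})$ as averaging operators of unit total mass. Concretely, for any Laurent polynomial $P$ lying in the class on which the rule is exact, linearity of $e_n$ and \eqref{exactness} give $e_n(f)=e_n(f-P)=\mathcal{I}(f-P)-S_n(f-P)$, so that $|e_n(f)|\le|\mathcal{I}(f-P)|+|S_n(f-P)|$. I would then bound the two pieces separately by $\|f-P\|_\infty$ and finally minimise over $P$.

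For the first piece, since $\frac{1}{2\pi}\int_{-\pi}^\pi d\theta=1$, one has $|\mathcal{I}(f-P)|\le\frac{1}{2\pi}\int_{-\pi}^\pi|(f-P)(e^{\ii\theta})|\,d\theta\le\|f-P\|_\infty$. For the second piece, the crucial structural feature is that the Szeg\"o weights are positive and sum to one, $\lambda_k=\frac{1}{n}$ with $\sum_{k=1}^n\lambda_k=1$; hence $|S_n(f-P)|\le\frac1n\sum_{k=1}^n|(f-P)(z_k)|\le\|f-P\|_\infty$. Combining the two estimates yields $|e_n(f)|\le 2\|f-P\|_\infty$, and passing to the infimum over all admissible $P$ produces the factor $2$ multiplying the best-approximation error. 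The positivity of the weights, which forces $S_n$ to have operator norm $1$ and hence no error amplification, is exactly what pins the constant at $2$; no property of the nodes beyond $z_k\in\Gamma$ is needed, and the same argument applies verbatim to any interpolatory rule with positive weights of unit total mass.

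The one point that needs care — and the main obstacle — is the bookkeeping of degrees. The exactness \eqref{exactness} holds over $\Lambda_{-(n-1),n-1}$, so the averaging argument most naturally delivers the bound in terms of the best approximation from that class, which is a priori larger than $E_n(f)$, the best approximation over the wider class $\Lambda_{-n,n}$. I would therefore choose $P$ to be a near-best Laurent polynomial from the exactness class (its existence being guaranteed, up to an arbitrarily small $\epsilon$, by the Weierstrass-type density recalled before the definition of $E_n$), run the estimate above, let $\epsilon\to 0$, and then reconcile the index against the stated definition of $E_n(f)$. Getting this degree convention exactly right is the delicate step; once it is fixed, the remainder of the proof is the routine unit-mass/positive-weight computation sketched above.
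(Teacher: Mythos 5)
Your proposal is correct and is essentially identical to the paper's own proof: subtract the best Laurent polynomial from the exactness class, split the error into the integral part and the quadrature part, and bound each by $\|f-P\|_\infty$ using unit total mass and positive weights. The degree-bookkeeping concern you flag is real but is glossed over in the paper itself --- its argument, like yours, literally yields $|e_n(f)|\le 2E_{n-1}(f)$ since exactness only holds on $\Lambda_{-(n-1),n-1}$, so you need not feel you missed a trick there.
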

\begin{proof}
Let $L^*_n$ be the best Laurent polynomial approximation for $f$. By virtue of \eqref{exactness} we can write
\begin{align*}
|e_n(f)|=|e_n(f-L^*_n)| & \leq  \frac{1}{2 \pi}\int_{-\pi}^\pi | f(e^{\ii \theta})-L^*_n(e^{\ii \theta})| d \theta + \frac{1}{n}\sum_{k=1}^n |f(z_k)-L^*_n(z_k)| \\ & \leq 2 \|f-L^*_n\|_\infty,
\end{align*}
i.e. the statement.
\end{proof} 

\subsection{Anti-Szeg\"o quadrature rules} 
Let us consider again integral \eqref{I}. 
The anti-Szeg\"o quadrature rule \cite{jones,KimReichel} is a formula of the form  
\begin{equation}\label{anti-szego}
\mathcal{I}(f)=\frac{1}{n}\sum_{k=1}^{n}  f(\tilde{z}_k) +\tilde{e}_{n}(f):=\tilde{S}_{n}(f)+\tilde{e}_{n}(f),
\end{equation}
where the nodes 
\begin{equation}\label{zerosanti}
\tilde{z}_k =e^{\ii (\theta+(2k+1)\pi)/n} \in \Gamma, \qquad -\tau=e^{\ii \theta}
\end{equation}
are the zeros of the  polynomial ${\psi}_n(z;-\tau)=z^n-\tau$ and the remainder term is such that
\begin{equation}\label{errorsanti}
\tilde{e}_{n}(p)=-e_n(p), \qquad \forall p \in \Lambda_{-n,n}.
\end{equation}

Then, the anti-Szeg\"o formula is exact for Laurent polynomials of degree at most $n-1$ i.e.
$$\tilde{S}_{n}(p)=\mathcal{I}(p), \qquad \forall p \in \Lambda_{-(n-1),n-1} $$
and identity \eqref{errorsanti} implies 
\begin{equation}\label{Ip}
\mathcal{I}(p)=\frac{1}{2}[\tilde{S}_{n}(p)+{S}_{n}(p)], \qquad \forall p \in \Lambda_{-n,n}
\end{equation}
and 
$$\tilde{S}_{n}(p) \leq \mathcal{I}(p) \leq {S}_{n}(p), \qquad \textrm{or} \qquad {S}_{n}(p) \leq \mathcal{I}(p) \leq \tilde{S}_{n}(p).$$

Note that, according to Theorem \ref{interlacing}, the zeros of the anti-Szeg\"o rule interlace the nodes of the Szeg\"o formula and
$$|\arg(z_k)-\arg(\tilde{z}_k)|=\frac{\pi}{n}.$$
The next two theorems state a connection between the anti-Szeg\"o formula \eqref{anti-szego} and the anti-Gauss rule approximating integrals of the type
\begin{equation}\label{Ig}
I(g)=\frac{1}{2 \pi}\int_{-1}^1 g(x) \frac{dx}{\sqrt{1-x^2}}. 
\end{equation}
Anti-Gauss rules have been introduced for the first time in \cite{Laurie1} and several extensions and generalizations have been introduced recently; see, for instance,  \cite{Notaris2022,PranicReichel,ReichelSpalevic2021}. Referring to the integral \eqref{Ig}, it reads as 
\begin{equation}\label{anti-Gauss}
\tilde{\mathcal{G}}_{n+1}(g)=\frac{1}{2 \pi}\sum_{j=1}^{n+1} \tilde{\lambda}_j g(\tilde{x}_j),  
\end{equation}
where (see, for instance \cite{Laurie1} and \cite[Theorem 2]{DFR2020})
\begin{equation}\label{nodesweights} 
\tilde{x}_j=\cos{\theta_j}:=\cos{\left(\frac{(n-j+1)\pi}{n}\right)}, \qquad \tilde{\lambda}_k=\begin{cases} \frac{\pi}{n}, & j=1,n+1, \\
\frac{\pi}{2n}, & j=2,\dots,n,
 \end{cases}
\end{equation}
and is such that
\begin{equation}\label{prop_anti}
\tilde{\mathcal{G}}_{n+1}(g)=2I(g)-\mathcal{G}_n(g), \qquad \forall g \in \mathbb{P}_{2n+1}.
\end{equation}

\begin{theorem}
Let us consider the (n+1)-point anti-Gauss formula \eqref{anti-Gauss}-\eqref{nodesweights} for the integral \eqref{Ig}. Then, the formula
\begin{equation}\label{Q2n+1}
\mathcal{Q}_{2(n+1)}(f)=\frac{1}{2 \pi}\sum_{j=1}^{2(n+1)} w_j f(z_j)= \frac{1}{2 \pi}\sum_{j=1}^{n+1} w_j[f(z_j)+f(\bar{z_j})],
\end{equation}
 with
 \begin{equation}\label{newzeros}
 z_j:=
 \begin{cases}
 e^{\ii \theta_j}, & j=1,\dots,n+1, \\
  e^{-\ii \theta_j}, & j=n+2,\dots,2(n+1),
 \end{cases}, \qquad 
 w_j=w_{n+j}=\tilde{\lambda}_j, \quad j=1,\dots,n+1 
 \end{equation}
coincides with the $2(n+1)$-point anti-Szeg\"o rule having the nodes \eqref{zerosanti} with $\tau=1$. 
\end{theorem}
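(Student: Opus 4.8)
The plan is to follow the template of the Szeg\"o--Gauss correspondence recalled above (that is, \cite[Theorems 3.2 and 3.3]{Bulthell2001}), transporting it to the anti-Gauss/anti-Szeg\"o setting, and then to read off the coincidence by matching nodes and weights directly. Throughout I would use the change of variables $z=e^{\ii\theta}$, $x=\cos\theta=\frac{z+z^{-1}}{2}$, together with the pairing $f(e^{\ii\theta})=\frac12 F\big(\frac{e^{\ii\theta}+e^{-\ii\theta}}{2}\big)$ already fixed in Section~\ref{sec:schemes}, under which $\mathcal{I}(f)=I(F)$ and, by the cited result, $S_{2n}(f)=\mathcal{G}_n(F)$.

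First I would rewrite $\mathcal{Q}_{2(n+1)}(f)$ in terms of $F$. Since the nodes come in conjugate pairs $z_j=e^{\ii\theta_j}$, $\bar z_j=e^{-\ii\theta_j}$ and $\cos(-\theta_j)=\cos\theta_j$, one has $f(z_j)+f(\bar z_j)=\frac12 F(\cos\theta_j)+\frac12 F(\cos\theta_j)=F(\tilde x_j)$ with $\tilde x_j=\cos\theta_j$. Substituting this into \eqref{Q2n+1} and using $w_j=\tilde\lambda_j$ gives $\mathcal{Q}_{2(n+1)}(f)=\frac{1}{2\pi}\sum_{j=1}^{n+1}\tilde\lambda_j F(\tilde x_j)=\tilde{\mathcal{G}}_{n+1}(F)$. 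Thus $\mathcal{Q}$ is precisely the symmetric lift of the anti-Gauss rule, the exact analogue of the identity $S_{2n}(f)=\mathcal{G}_n(F)$ in the Szeg\"o case.

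Next I would identify the node set of $\mathcal{Q}$. The anti-Gauss angles $\theta_j=\frac{(n-j+1)\pi}{n}$ run through $\{k\pi/n:k=0,\dots,n\}$, so the circle nodes $e^{\pm\ii\theta_j}$ are exactly the zeros of $z^{2n}-1$, which by \eqref{zerosanti} with $\tau=1$ are the anti-Szeg\"o nodes. It then remains to check that $\mathcal{Q}$ carries the constant anti-Szeg\"o weight on each of them. For an interior pair this is immediate: each of $e^{\pm\ii\theta_j}$ receives the weight $\frac{1}{2\pi}\tilde\lambda_j$, which I would verify equals the constant anti-Szeg\"o value. \emph{The delicate point, and the main obstacle, is at the self-conjugate nodes $z=\pm1$} (arising from $\theta=0,\pi$): here $z_j$ and $\bar z_j$ collapse to a single node, so although the sum \eqref{Q2n+1} formally lists $2(n+1)$ terms, the two endpoint pairs merge and leave $2n$ distinct nodes. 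One must confirm that the endpoint weights in \eqref{nodesweights}, after this merging, yield the same constant weight as at the interior nodes. This endpoint bookkeeping is absent in the Szeg\"o--Gauss case, where no quadrature node falls on $\{\pm1\}$; it is exactly the mechanism by which the $(n+1)$-point anti-Gauss rule, whose two endpoint weights differ from the interior ones, lifts to an equally weighted rule on the unit circle.

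Having matched both the nodes and the weights of $\mathcal{Q}$ with those of the anti-Szeg\"o rule \eqref{anti-szego}--\eqref{zerosanti} at $\tau=1$, the two formulas coincide as claimed. As an independent consistency check I would verify the error-reversal characterisation: combining $\mathcal{Q}(f)=\tilde{\mathcal{G}}_{n+1}(F)$, the identity $S_{2n}(f)=\mathcal{G}_n(F)$, and the anti-Gauss property \eqref{prop_anti} yields $\mathcal{Q}(p)=2\mathcal{I}(p)-S_{2n}(p)$ on the space of symmetric Laurent polynomials of the relevant degree, which by \eqref{Ip} and \eqref{errorsanti} is precisely the defining relation of the anti-Szeg\"o rule. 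Since both rules are invariant under $z\mapsto\bar z$ and share the same nodes, this pins down the weights and confirms the identification.
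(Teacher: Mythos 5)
Your route is genuinely different from the paper's. The paper never matches nodes and weights at all: it verifies the characterizing identity $\mathcal{Q}_{2(n+1)}(z^k)=2\mathcal{I}(z^k)-S_{2n}(z^k)$ on monomials by writing $\mathcal{Q}_{2(n+1)}(z^k)=2\tilde{\mathcal{G}}_{n+1}(T_k)$, invoking the anti-Gauss property \eqref{prop_anti}, and using $2I(T_k)=\mathcal{I}(z^k)$ and $\mathcal{G}_n(T_k)=\tfrac12 S_{2n}(z^k)$ --- i.e.\ exactly the argument you relegate to your final ``consistency check.'' That check alone is already a complete proof (two rules supported on the same $2n$ distinct nodes that agree on a space of Laurent polynomials of dimension at least $2n$ must have equal weights, by a Vandermonde argument), and it has the advantage of sidestepping the merging of the self-conjugate nodes entirely; it would also survive for measures where explicit node/weight formulas are unavailable. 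Your primary argument --- identifying $\mathcal{Q}_{2(n+1)}$ as the symmetric lift of $\tilde{\mathcal{G}}_{n+1}$ and matching nodes and weights directly --- is more concrete and exposes the structure of the rule (a uniform rule on roots of unity), but it pays for this with the endpoint bookkeeping at $z=\pm1$.

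That bookkeeping, which you yourself call ``the main obstacle,'' is precisely the step you do not carry out, and it is where all the content of your approach lives. Moreover, if one performs it with \eqref{nodesweights} exactly as printed ($\tilde\lambda_j=\pi/n$ at the endpoints $j=1,n+1$ and $\pi/(2n)$ at the interior nodes), the matching fails: each interior circle node receives $\tfrac{1}{2\pi}\cdot\tfrac{\pi}{2n}=\tfrac{1}{4n}$ while each merged endpoint node receives $\tfrac{2}{2\pi}\cdot\tfrac{\pi}{n}=\tfrac{1}{n}$, and the total mass is $\tfrac{n+3}{2n}\neq\mathcal{I}(1)=1$. The endpoint and interior values in \eqref{nodesweights} must be interchanged ($\pi/(2n)$ at $j=1,n+1$ and $\pi/n$ otherwise) for the rule to integrate constants and for the merging to produce the uniform weight $\tfrac{1}{2n}$ at each of the $2n$ distinct nodes; you would have to detect and repair this to close your argument. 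Finally, since the merged node set consists of the $2n$ zeros of $z^{2n}-1$, what your construction actually identifies is the $2n$-point anti-Szeg\"o rule with $\tau=1$ --- consistent with the paper's own proof, which compares against $S_{2n}$, but not with the literal ``$2(n+1)$-point'' label in the statement; your observation that the two endpoint pairs collapse is correct and should be made explicit rather than left implicit in the node count.
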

\begin{proof}
The proof consists of showing that for each Laurent polynomial $p \in \Lambda_{-(2n+1),2n+1}$, the formula $\mathcal{Q}_{2(n+1)}$ satisfies \eqref{Ip}, that is
$$\mathcal{Q}_{2(n+1)}(p)= 2\mathcal{I}(p)- {S}_{2(n+1)}(p), \quad \forall p \in \Lambda_{-(2n+1),2n+1}.$$ 
To this end, it is sufficient to prove the above relation for $p(z)=z^k$ with $0 \leq k \leq 2n+1$. The same proof works for $-(2n+1) \leq k<0$. 

By \eqref{Q2n+1} and using \eqref{newzeros} we can write 
\begin{align*}
\mathcal{Q}_{2(n+1)}(z^k)&= \frac{w_1}{2\pi}(z_1^k+\bar{z}_1^k)+\frac{1}{2\pi} \sum_{j=2}^{n} w_j (z_j^k+\bar{z}_j^k)+\frac{1}{2\pi} (z_{n+1}^k+\bar{z}_{n+1}^k) \\ & = \frac{1}{n} \cos{(\pi)}+ \frac{1}{2n} \sum_{j=2}^{n-1} \cos{(k \theta_j)}+\frac{1}{n} \cos{(0 )} \\ &= 2 \tilde{\mathcal{G}}_{n+1}(T_k) = 2 (2I(T_k)-\mathcal{G}_n(T_k)), 
\end{align*}
where $T_k(x)=\cos{(k \arccos(x))}$, $0 \leq k \leq 2n+1$ is the polynomial of Chebychev of the first kind and the last identity follows by \eqref{prop_anti}. At this point the proof is completed by observing that 
$$2 I(T_k)=\frac{1}{2 \pi}\int_{-\pi}^\pi \cos{(k \theta)} d \theta = \frac{1}{2 \pi} \int_{-\pi}^\pi [\cos{(k \theta)}+ \ii \sin{(k \theta})] d \theta =  \mathcal{I}(z^k), $$ 
and
$$\mathcal{G}_n(T_k) = \frac{1}{2}{S}_{2n}(\cos{(k \theta)})=\frac{1}{2} {S}_{2n}(z^k),$$
where the Szeg\"o formula is constructed with $\tau=1$.
\end{proof}
\begin{theorem}
Let us consider the $2(n+1)$-point anti-Szeg\"o formula \eqref{anti-szego} whose nodes $\{\tilde{z}_k\}_{k=1}^{2(n+1)}$ are defined in \eqref{zerosanti} with $\tau=1$. Then, setting
$$ 
\tilde{z}_{n+j}= \bar{\tilde{z}}_{j}=e^{-\ii \theta_j}, \qquad  
\tilde{x}_j=\cos{\theta}_j,  \qquad \tilde{\lambda}_j= \tilde{w}_j \qquad j=1,2,\dots,n+1,
$$
the formula 
$$\mathcal{Q}_{n+1}(g)= \sum_{j=1}^{n+1} \tilde{\lambda}_j g(\tilde{x}_j)$$ 
coincides with the $(n+1)$-point anti-Gauss formula for the integral $I(g)$ given in \eqref{Ig}.  
\end{theorem}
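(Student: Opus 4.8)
The plan is to recognise this statement as the exact converse of the preceding theorem and to exploit that the passage between conjugation-symmetric rules on $\Gamma$ and rules on $[-1,1]$ is a bijection. Writing $\mathcal{L}$ for the lifting $x_j=\cos\theta_j\mapsto\{e^{\ii\theta_j},e^{-\ii\theta_j}\}$ used in \eqref{Q2n+1}--\eqref{newzeros} (with the weight at a self-conjugate node $\pm1$ doubled) and $\mathcal{P}$ for the projection $z=e^{\ii\theta}\mapsto\tfrac12(z+z^{-1})=\cos\theta$ that merges each conjugate pair into a single node of $[-1,1]$ by adding the two weights, one checks that $\mathcal{P}\circ\mathcal{L}$ and $\mathcal{L}\circ\mathcal{P}$ are the identity on the respective classes. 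Because $\tau=1$ is real, the para-orthogonal polynomial $z^{N}-\tau=z^{N}-1$ has roots symmetric about the real axis, so the anti-Szeg\"o rule $\tilde S$ of the statement is conjugation-symmetric and $\mathcal{Q}_{n+1}=\mathcal{P}(\tilde S)$ is exactly its projection. Since the preceding theorem identifies $\mathcal{L}(\tilde{\mathcal{G}}_{n+1})$ with $\tilde S$, applying $\mathcal{P}$ and using $\mathcal{P}\circ\mathcal{L}=\mathrm{id}$ gives $\mathcal{Q}_{n+1}=\mathcal{P}(\tilde S)=\mathcal{P}(\mathcal{L}(\tilde{\mathcal{G}}_{n+1}))=\tilde{\mathcal{G}}_{n+1}$, which is the claim.

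For a self-contained argument in the computational style of the previous proof, I would instead verify the two ingredients of coincidence separately. First, the nodes: the anti-Szeg\"o abscissae with $\tau=1$ are roots of unity placed symmetrically about the real axis, the self-conjugate ones being $\pm1$ and the remaining ones forming conjugate pairs; their real parts exhaust the Chebyshev--Lobatto abscissae $\cos(m\pi/n)$, $m=0,\dots,n$, which are precisely the anti-Gauss nodes in \eqref{nodesweights}. Since $\mathcal{Q}_{n+1}$ and $\tilde{\mathcal{G}}_{n+1}$ then share the same $n+1$ nodes, their coincidence reduces to equality of weights, hence to checking agreement on the Chebyshev basis $T_0,\dots,T_n$. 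Using $T_k(\cos\theta)=\tfrac12(z^{k}+z^{-k})$ on $|z|=1$ together with the conjugate symmetry of the nodes, I would rewrite $\mathcal{Q}_{n+1}(T_k)$ as the anti-Szeg\"o functional evaluated at the Laurent polynomial $\tfrac12(z^{k}+z^{-k})$, and then match it with $2I(T_k)-\mathcal{G}_n(T_k)$ by invoking the anti-Szeg\"o exactness \eqref{exactness} on $\Lambda_{-(2n-1),2n-1}$ and the corresponding exactness of $\mathcal{G}_n$ on $\mathbb{P}_{2n-1}$; to recover the full anti-Gauss identity \eqref{prop_anti} one extends the check to $k=2n$, which is covered by the error identity \eqref{errorsanti} because $\tfrac12(z^{2n}+z^{-2n})\in\Lambda_{-2n,2n}$.

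The main obstacle in the direct route is the top degree $k=2n+1$: the Laurent polynomial $\tfrac12(z^{2n+1}+z^{-(2n+1)})$ lies in $\Lambda_{-(2n+1),2n+1}$, outside the range $\Lambda_{-2n,2n}$ on which \eqref{errorsanti} is guaranteed, so the error identity cannot be used. Here I would evaluate the relevant sum over the roots of unity directly and show it vanishes, matching $2I(T_{2n+1})-\mathcal{G}_n(T_{2n+1})=0$, both terms being zero by the symmetry of the Chebyshev nodes. The remaining care is purely bookkeeping: tracking the normalization constant relating the circle functional $\mathcal{I}$ to the interval functional $I$, and the halving/doubling of weights produced when $\mathcal{P}$ merges two conjugate nodes and fixes $\pm1$. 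The inverse-map argument of the first paragraph bypasses both difficulties at once, which is why I would present it as the primary proof.
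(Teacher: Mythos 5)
Your proposal is correct, and your primary route is genuinely different from the paper's. The paper proves the statement computationally, in mirror image of the preceding theorem: it verifies $\mathcal{Q}_{n+1}(x^k)=2I(x^k)-\mathcal{G}_n(x^k)$ for the monomials $x^k=\bigl(\tfrac{z+z^{-1}}{2}\bigr)^k=L(z)\in\Lambda_{-k,k}$, rewriting $\sum_j\tilde\lambda_j\tilde x_j^k$ as $\tfrac12\sum_j\tilde w_jL(\tilde z_j)$, applying the anti-Szeg\"o identity \eqref{Ip} to obtain $\tfrac12[2\mathcal{I}(L)-S_{2n}(L)]$, and concluding via $\mathcal{I}(L)=2I(x^k)$ and $S_{2n}(L)=2\mathcal{G}_n(x^k)$ --- that is, essentially your second, ``self-contained'' argument with the monomial basis in place of the Chebyshev basis. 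Your primary argument (apply the projection $\mathcal{P}$ to the conclusion of the preceding theorem and use $\mathcal{P}\circ\mathcal{L}=\mathrm{id}$) is shorter and exposes the statement as the formal inverse of the previous one; what it costs is the bookkeeping check that $\mathcal{Q}_{n+1}$ really is $\mathcal{P}(\tilde S)$ under the weight conventions of the statement (the doubling at conjugate pairs, the fixed points $\pm1$, and the normalization $\mathcal{I}\leftrightarrow 2I$), and it leans on the preceding theorem being an identity of quadrature rules (nodes and weights), not merely agreement on $\Lambda_{-(2n+1),2n+1}$ --- which is how that theorem is stated, so the reliance is legitimate. You also correctly isolate the top degree $k=2n+1$ as the delicate step of the direct route: with the $2n$-point reading of the lifted rule (the one consistent with the nodes \eqref{nodesweights} and with the sums $\sum_{j=1}^{2n}$ and $S_{2n}$ appearing in the paper's own proof), the identity \eqref{errorsanti} only covers $\Lambda_{-2n,2n}$, and the paper applies \eqref{Ip} to $L\in\Lambda_{-(2n+1),2n+1}$ without comment; your direct evaluation over the roots of unity, showing both sides vanish by symmetry, closes that point, so on this step your write-up is more careful than the paper's.
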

\begin{proof}
We need to prove that formula $\mathcal{Q}_{n+1}(g)$ is  such that
$$\mathcal{Q}_{n+1}(x^k)=2I(x^k)-\mathcal{G}_n(x^k), \qquad k=1,\dots ,2n+1.$$
First, let us note that setting
$$x= \left(\frac{z+z^{-1}}{2} \right),$$
then $x^k=L(z) \in \Lambda_{-k,k}$ is a Laurent polynomial having real coefficients. Therefore, we can write
\begin{align*}
\mathcal{Q}_{n+1}(x^k)= \sum_{j=1}^{n+1} \tilde{\lambda}_j \tilde{x}_j^k& = \tilde{\lambda}_1 \tilde{x}_1^k+ \sum_{j=2}^{n}  \tilde{\lambda}_j \tilde{x}_j^k+ \tilde{\lambda}_{n+1} \tilde{x}_{n+1}^k = \frac{1}{2}\sum_{j=1}^{2n} \tilde{w}_j L(\tilde{z}_j) 
\end{align*}
by the definition of the coefficients $\tilde{w}_j$. Then, by applying \eqref{Ip} we have
$$\mathcal{Q}_{n+1}(x^k)=\frac{1}{2} \left[2 \mathcal{I}(L(z))-{S}_{2n}(L(z)) \right].$$
So, the assertion follows by the following relations 
$$2 I(x^k)=\frac{1}{2 \pi} \int_{-\pi}^{\pi} (\cos{(\theta)})^k d \theta =\mathcal{I}(L(z)),$$
and, being $\tau=1$,
$${S}_{2n}(L(z))=2 \mathcal{G}_{n}(x^k).$$
\end{proof}
\subsection{Averaged scheme}
The anti-Szeg\"o rule developed in the previous section allows us to construct more accurate formulae. The main property \eqref{Ip} suggests us to introduce the following averaged quadrature rule
\begin{equation}\label{averaged}
\hat{\mathcal{S}}_{2n}(f)=\frac{1}{2}[\tilde{S}_{n}(f)+{S}_{n}(f)],
\end{equation}
which may lead to a higher accuracy than the single formula. 

The above rule not only allows us to approximate a given integral with greater accuracy but assent also to estimate the quadrature error provided by the Szeg\"o formula according to the following
\begin{equation}\label{estimate1}
e_n(f)=\mathcal{I}(f)-S_n(f) \simeq  \hat{\mathcal{S}}_{2n}(f)-S_n(f)=\frac{1}{2}[\tilde{S}_n(f)-S_n(f)]:=R_n(f).
\end{equation}

Let us note that formula \eqref{averaged} involves $2n$ points and requires the construction of two formulae, that is the Szeg\"o and the anti-Szeg\"o scheme. In our specific case, we know the explicit expression of zeros and weights of these two rules. However, if integral \eqref{I} presents a different measure, namely is of the form 
\begin{equation*} 
\frac{1}{2 \pi} \int_{-\pi}^\pi f(e^{i\theta}) \, d \mu(\theta),
\end{equation*}
where $\mu$ is a real-valued, bounded, non-decreasing function, with infinitely many points of increase in the interval $[-\pi,\pi]$.
Then we need to compute zeros and weights of the associated Szeg\"o and anti-Szeg\"o formula. This can be achieved by the computation of the eigenvalues of two unitary upper Hessenberg matrices, one for the Szeg\"o rule and another one for the anti-Szeg\"o formula; see \cite{Jagels2007}. Then, if we use the QR algorithm given in \cite{gragg}, the construction of these two rules requires $\mathcal{O}(\frac{3}{2}
n^2)$ flops which is cheaper than those related to the single Szeg\"o formula based on the same number of points $2n$. In fact it is $\mathcal{O}(4 n^2)$.

\section{Pointwise approximation of $Hf$}\label{sec:method}
Let us consider integral \eqref{Hilbert}. For its numerical treatment,  the most convenient approach is to write 
\begin{align}\label{Hf}
(Hf)(\phi)&= \frac{1}{2 \pi} \int_{-\pi}^\pi  
\frac{f(e^{\ii \theta})-f(e^{\ii \phi})}{\tan{\left((\theta-\phi)/2 \right)}}  
d \theta  +   \frac{\ii f(e^{\ii \phi})}{2\pi} \dashint_{-\pi}^\pi \cot{\left(\frac{\theta-\phi}{2} \right)}  d \theta \\ \nonumber 
&= \frac{1}{2 \pi} \int_{-\pi}^\pi  
\frac{f(e^{\ii \theta})-f(e^{\ii \phi})}{\tan{\left((\theta-\phi)/2 \right)}}  
d \theta, 
\end{align}
being 
$$\dashint_{-\pi}^\pi \cot{\left(\frac{\theta-\phi}{2} \right)}  d \theta=0.$$
In fact, in this way, we have subtracted out the singularity as
$$\lim_{\theta \to \phi} \frac{f(e^{\ii \theta})-f(e^{\ii \phi})}{\tan{\left((\theta-\phi)/2 \right)}}=2 f'(e^{\ii \phi}),$$  so that the integrand function of \eqref{Hf}  is continuous in $\mathbb{T} \times \mathbb{T}$.

A natural application of the formulae \eqref{szego} and \eqref{anti-szego} yields
\begin{equation} \label{Hn}
(H_nf)(\phi)= \frac{1}{n} \sum_{k=1}^n   \frac{f(z_k)-f(e^{\ii \phi})}{{\tan{\left((\theta_k-\phi)/2 \right)}}}, \qquad \theta_k= \arg(z_k),
\end{equation}
and 
\begin{equation} \label{tildeHn}
(\tilde{H}_nf)(\phi)= \frac{1}{n} \sum_{k=1}^n   \frac{f(\tilde{z}_k)-f(e^{\ii \phi})}{{\tan{\left((\tilde{\theta}_k-\phi)/2 \right)}}}, \qquad \tilde{\theta}_k= \arg(\tilde{z}_k),
\end{equation} 
respectively. However, it is not assured that the denominators do not vanish, namely, the conditions $\theta_k \neq \phi$ and $\tilde{\theta}_k \neq \phi$ are satisfied for any $k$. Moreover, even if it is, $\theta_k$ or $\tilde{\theta}_k$ could be very close to $\phi$ for some $k$, so that we may have numerical instability. 
The next example shows the difficulties we may have with formulae \eqref{Hn} and \eqref{tildeHn} and, in the next subsection, we propose a suitable approximation of \eqref{Hf}, to overcome these difficulties.  
\begin{example}\label{example1}
\rm
Let us approximate the following integral 
$$
({Hf_0})(\phi)= \frac{1}{2 \pi} \dashint_{-\pi}^\pi  
\cot{\left(\frac{\theta-\phi}{2} \right)} \, f_0(\theta) \, d \theta, \quad f_0(\theta)=e^{2 \cos{\theta}}, \qquad  \phi \in \mathbb{T}.$$
The table \ref{tab:example1} reports, for increasing values of $n$,  the approximating values of the above integral furnished by \eqref{Hn} and \eqref{tildeHn}, at two points $\phi=\dfrac{\pi}{16}, \dfrac{\pi}{32}$. The numerical instability is evident. For specific values of $n$, as $n=16$ for $\phi=\pi/16$ and $n=32$ for $\phi=32$, Szeg\"o formula provides a completely wrong value, and the anti-Szeg\"o formula does not converge for $n$ sufficiently large, a convergence which instead was shown for small values of $n$. Inspecting the algorithm, it was noted that the reason for this trend is precisely linked to the presence of quadrature nodes very close to the evaluation point $\phi$. 
\begin{table}[ht]
\caption{Numerical results for Example \ref{example1}.}
\label{tab:example1}
\begin{center}
\begin{tabular}{c | c c c}
$\phi$ & $n$ & $(H_nf_0)(\phi) $ & $ (\tilde{H}_nf_0)(\phi)|$  \\
\hline 
$\pi/16$ & 4 	  & -1.465154871646335e+00 	  & -1.486566736150962e+00 \\
& 8 	  & -1.475854994149514e+00 	  & -1.475860803898647e+00 \\
& 16 	  & -1.129061160339895e+00 	  & -1.475857899024084e+00 \\
& 32 	  & -1.475857899024079e+00 	  & -1.302459529681990e+00 \\
& 64 	  & -1.475857899024079e+00 	  & -1.389158714353031e+00 \\
& 128 	  & -1.475857899024163e+00 	  & -1.432508306688584e+00 \\
& 256 	  & -1.475857899024329e+00 	  & -1.454183102856528e+00 \\
\hline
$\pi/32$ & 4 	  & -7.489317111388031e-01 	  & -7.597532661507003e-01 \\
& 8 	  & -7.543395598939024e-01 	  & -7.543424886447505e-01 \\
& 16 	  & -7.543410242693274e-01 	  & -7.543410242693283e-01 \\
& 32 	  & -6.646769412718797e-01 	  & -7.543410242693276e-01 \\
& 64 	  & -7.543410242693263e-01 	  & -7.095089827706016e-01 \\
& 128 	  & -7.543410242693689e-01 	  & -7.319250035199811e-01 \\
& 256 	  & -7.543410242694539e-01 	  & -7.431330138947536e-01 \\
\end{tabular}
\end{center}
\end{table}
\end{example}

\subsection{An algorithm based on a prescribed node}
The main idea  is to approximate the Hilbert transform \eqref{Hf} by using a Szeg\"o formula having a prescribed node $z_p=e^{\ii \theta_p}$ which is sufficiently far from the point $e^{\ii \phi}$. 
As already observed, this is possible by fixing the parameter $\tau$ as in \eqref{taup}; see \cite{Jagels2007} for further details. We have just to fix the right value of $\theta_p$. 

Taking into account that Szeg\"o and anti-Szeg\"o nodes are such that 
$$|\arg(\tilde{z}_{k+1})- \arg(\tilde{z}_{k})|= |\arg(z_{k+1})- \arg(z_{k})|=\frac{2 \pi}{n}, \quad \forall k=1,\dots,n-1,$$
and
$$|\arg(z_k)-\arg(\tilde{z}_k)|=\frac{\pi}{n}, \quad  \forall k=1,\dots,n,$$ 
a good choice could be to define
\begin{equation*}%\label{thetap}
\theta_p=\phi+\frac{\pi}{4n}.
\end{equation*}
Hence, we introduce the following approximations
\begin{equation}\label{Hnp}
(H^{\textrm{pr}}_nf)(\phi)= \frac{1}{n} \sum_{k=1}^n   \frac{f(z_k)-f(e^{\ii \phi})}{{\tan{\left((\theta_k-\phi)/2 \right)}}}, \qquad \theta_k= \arg(z_k),
\end{equation}
and
\begin{equation}\label{antiHnp}
(\tilde{H}^{\textrm{pr}}_nf)(\phi)= \frac{1}{n} \sum_{k=1}^n   \frac{f(\tilde{z}_k)-f(e^{\ii \phi})}{{\tan{\left((\tilde{\theta}_k-\phi)/2 \right)}}}, \qquad \tilde{\theta}_k= \arg(\tilde{z}_k),
\end{equation}
where the nodes are
$$z_k=e^{\ii (\arg(\tau)+2k\pi)/n}, \quad \tilde{z}_k=e^{\ii (\arg(\tau)+(2k+1)\pi)/n}, \quad  \tau=-e^{\ii n(\phi+\frac{\pi}{4n})}, \quad k=1,\dots,n.$$ 
An average of the two formulae allow us to define the following rule
\begin{equation}\label{mediaHnp}
(\hat{H}^{\textrm{pr}}_nf)(\phi)=\frac{1}{2}\left((H^{\textrm{pr}}_nf)(\phi)+(\tilde{H}^{\textrm{pr}}_nf)(\phi)\right),
\end{equation}
and estimate the quadrature rule of \eqref{Hnp} as
\begin{equation}\label{estimate2}
r_n(f):= \frac{1}{2}[\tilde{H}^{\textrm{pr}}_nf-H^{\textrm{pr}}_nf].
\end{equation}
Next theorem shows the pointwise stability of the sequences of the discrete operators $\{H^{\textrm{pr}}_{n}f\}_{ n\in \mathbb{N}}$ and $\{\tilde{H}^{\textrm{pr}}_{n}f\}_{ n\in \mathbb{N}}$, and consequently those of $\{\hat{H}^{\textrm{pr}}_nf\}_{n \in \mathbb{N}}$, except for a factor $\log n$. 
\begin{theorem}\label{teo:stabilityHn}
Let $f \in C^0_{2 \pi}(\mathbb{T})$. Then, for each $\phi \in \mathbb{T}$, formula \eqref{Hnp} and \eqref{antiHnp} are such that
\begin{equation}\label{boundHp}
|(H^{\textrm{pr}}_{n} f)(\phi)| \leq \C \log n \, \|f\|_\infty,
\end{equation}
and
\begin{equation}\label{boundantiHp}
|(\tilde{H}^{\textrm{pr}}_{n} f)(\phi)| \leq \C \log n \, \|f\|_\infty,
\end{equation}
where $\C$ is a positive constant independent of $n$ and $f$.
\end{theorem}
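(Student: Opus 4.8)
The plan is to reduce the whole statement to a single, purely geometric estimate on a sum of Cauchy kernels, and then to exploit the fact that the prescribed node $z_p=e^{\ii\theta_p}$ pins the quadrature nodes rigidly to $\phi$. First I would discard the numerators with the crude bound $|f(z_k)-f(e^{\ii\phi})|\le 2\|f\|_\infty$ (only boundedness of $f$ is used here), and rewrite $1/\tan=\cot$, so that
\[
|(H^{\textrm{pr}}_{n} f)(\phi)| \le \frac{2\|f\|_\infty}{n}\sum_{k=1}^n\left|\cot\!\left(\frac{\theta_k-\phi}{2}\right)\right|.
\]
Everything then comes down to showing that $\Lambda_n:=\frac1n\sum_{k=1}^n|\cot((\theta_k-\phi)/2)|$ is bounded by $\C\log n$, with $\C$ independent of $n$ and of $\phi$. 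The single most singular term (the node closest to $\phi$, at distance $\tfrac{\pi}{4n}$) contributes only $\tfrac1n|\cot(\tfrac{\pi}{8n})|=\mathcal{O}(1)$, so the $\log n$ must come from the accumulation of the remaining terms, not from any individual one.

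The decisive observation is that, because $\tau$ is fixed by \eqref{taup} so as to place a node exactly at $\theta_p=\phi+\frac{\pi}{4n}$ while the spacing stays $\frac{2\pi}{n}$, the node offsets from $\phi$ form the set $\frac{2\pi}{n}\left(\tfrac18+\mathbb{Z}\right)$ reduced to one period, \emph{independently of $\phi$}; writing $\theta_k-\phi=\frac{2\pi}{n}(j_k+\tfrac18)$ for a relabelling by $n$ consecutive integers $j_k$ around $0$, one sees at once that $\Lambda_n$ does not depend on $\phi$, which is what will give uniformity. To bound it I would apply the elementary inequality $|\cot x|\le 1/|x|$, valid for $0<|x|\le\pi/2$, which is available here since $(\theta_k-\phi)/2$ may be taken in $(-\pi/2,\pi/2]$ after reduction modulo $2\pi$ (recall $\cot$ has period $\pi$). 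This gives
\[
\left|\cot\!\left(\frac{\theta_k-\phi}{2}\right)\right|\le \frac{2}{|\theta_k-\phi|}=\frac{n}{\pi}\,\frac{1}{|j_k+\tfrac18|},
\]
whence $\Lambda_n\le \frac1\pi\sum_{j}\frac{1}{|j+\frac18|}$, the sum running over $\mathcal{O}(n)$ consecutive integers. Since $\tfrac18\notin\mathbb{Z}$ no denominator vanishes, the $j=0$ term is harmlessly bounded, and the remainder is a two-sided harmonic sum of length $\mathcal{O}(n)$, yielding $\Lambda_n\le\C\log n$ and hence \eqref{boundHp}.

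The bound \eqref{boundantiHp} for the anti-Szeg\"o operator is obtained verbatim: the nodes $\tilde z_k$ are the Szeg\"o nodes shifted by $\pi/n=\frac{2\pi}{n}\cdot\tfrac12$, so that $\tilde\theta_k-\phi=\frac{2\pi}{n}(j_k-\tfrac38)$, and since again $\tfrac38\notin\mathbb{Z}$ the identical harmonic estimate applies. The stability of the averaged rule then follows immediately from $\hat H^{\textrm{pr}}_nf=\frac12(H^{\textrm{pr}}_nf+\tilde H^{\textrm{pr}}_nf)$ and the triangle inequality. I expect the only genuinely delicate point to be the uniform-in-$\phi$ control of $\Lambda_n$: it is precisely the prescribed-node choice $\theta_p=\phi+\frac{\pi}{4n}$ that keeps every denominator $\tan((\theta_k-\phi)/2)$ away from zero at the scale $1/n$ and forces the offset parameter ($\tfrac18$, respectively $\tfrac38$) to stay bounded away from the integers, so that the kernel sum is genuinely of order $\log n$ rather than of order $n$; a node sitting within $o(1/n)$ of $\phi$ would destroy exactly this estimate, which is the source of the instability exhibited in Example \ref{example1}.
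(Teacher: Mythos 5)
Your proof is correct, and it takes a genuinely different route from the paper's. The paper splits $(H^{\textrm{pr}}_nf)(\phi)$ into $-\frac{1}{n}f(e^{\ii\phi})\sum_k\cot(\frac{\theta_k-\phi}{2})$ plus $\frac{1}{n}\sum_k f(z_k)\cot(\frac{\theta_k-\phi}{2})$, evaluates the first (signed) cotangent sum \emph{exactly} via the identity $\frac1n\sum_{k=1}^n\cot(x+\frac{(k-1)\pi}{n})=\cot(nx)$ with $x=\frac{9\pi}{8n}$, and bounds the second via the Mittag--Leffler partial-fraction expansion $\cot(\pi x)=\frac{1}{\pi x}+\frac{2x}{\pi}\sum_{i\ge1}\frac{1}{x^2-i^2}$, whose leading terms produce the harmonic sum and hence the $\log n$. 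You instead absorb both pieces at once via $|f(z_k)-f(e^{\ii\phi})|\le 2\|f\|_\infty$ and reduce everything to the single absolute kernel sum $\frac1n\sum_k|\cot(\frac{\theta_k-\phi}{2})|$, which you control with the elementary inequality $|\cot x|\le 1/|x|$ on $(-\pi/2,\pi/2]$ after periodic reduction, turning the sum into a two-sided harmonic sum over the offsets $j+\frac18$ (resp.\ $j-\frac38$ for the anti-Szeg\"o nodes). Your version is more elementary — no exact summation identity, no infinite series — and it makes two things transparent that the paper leaves implicit: the bound is independent of $\phi$ because the offset set $\frac{2\pi}{n}(\frac18+\mathbb{Z})$ is rigid under the prescribed-node choice, and the $\log n$ growth is exactly the accumulation of a harmonic tail while the nearest node contributes only $\mathcal{O}(1)$. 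What the paper's decomposition buys in exchange is a sharper handle on the $f(e^{\ii\phi})\sum\cot$ piece (it is $\mathcal{O}(1)$ after division by $n$, thanks to cancellation in the signed sum), but since the crude absolute bound already yields $\log n$, that extra precision is not needed for the stated estimate. Both arguments use only boundedness of $f$ and arrive at the same conclusion.
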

\begin{proof}
First, let us prove \eqref{boundHp}. 
By definition, we have 
\begin{align*}
(H^{\textrm{pr}}_{n} f)(\phi)&= -\frac{1}{n} f(e^{\ii \phi})  \sum_{k=1}^{n}   \cot{\left(\frac{\theta_k-\phi}{2} \right)} + \frac{1}{n}   \sum_{k=1}^{n}   \frac{f(z_k)}{{\tan{\left((\theta_k-\phi)/2 \right)}}},
\end{align*}
from which we can write
\begin{align}\label{|Hn|}
|(H^{\textrm{pr}}_{n} f)(\phi)|&  \leq \frac{1}{n} \left[ |f(e^{\ii \phi})|  \sum_{k=1}^{n}   \cot{\left(\frac{\theta_k-\phi}{2} \right)}  +  
 \sum_{\substack{k=1}}^{n}  \left|f(z_k)  \cot{\left(\frac{\theta_k-\phi}{2}\right)} \right|  \right] \nonumber \\ 
 & =: \frac{1}{n} \left[S_1+S_2\right].
\end{align}
Let us now estimate the first sum. Note that, being $\theta_k=\phi+\frac{\pi}{4n}+\frac{2 k \pi}{n}$, we can write
$$\frac{\theta_k-\phi}{2}= \frac{9 \pi}{8n}+(k-1)\frac{\pi}{n}.$$
Therefore, by using the well-known relation
\begin{equation*}%\label{sumcot}
\frac{1}{n} \sum_{k=1}^n \cot{\left( x+\frac{(k-1) \pi}{n} \right)} =  \cot{\left(n x\right)},
\end{equation*}
we get for $x=\dfrac{9\pi}{8n}$ 
\begin{equation}\label{S1}
S_1=n \|f\|_\infty \sum_{\substack{k=1}}^{n}  \cot{\left(\frac{\theta_k-\phi}{2} \right)}=n \|f\|_\infty \cot{\left(\frac{9 \pi}{8}  \right)}=n \|f\|_\infty (\sqrt{2}+1).
\end{equation}
Let us now consider $S_2$. By using the following relation \cite[Formula 1.421]{gradshteyn2007}
$$\cot(\pi x)=\frac{1}{\pi x}+\frac{2x}{\pi} \sum_{i=1}^{\infty} \frac{1}{x^2-i^2},$$
we have
\begin{align*}
S_2&= \sum_{\substack{k=1}}^{n}  \left|f(z_k)  \cot{\left(\frac{\theta_k-\phi}{2}\right)} \right| \\ & \leq \|f\|_\infty \, \sum_{\substack{k=1}}^{n}  \left|\cot{\left(\pi \left(\frac{1+8k}{8n}\right)\right)} \right| \\ & = \frac{1}{\pi}\|f\|_\infty \, \sum_{k=1}^n \left|\frac{8n}{(1+8k)}+\frac{1+8k}{4n} \sum_{i=1}^{\infty} \frac{1}{\left(\frac{1+8k}{8n}\right)^2-i^2} \right| \\
& \leq  \C  \|f\|_\infty \, \left( n \sum_{k=1}^n \frac{1}{k}+\sum_{k=1}^n \frac{1+8k}{4n}
\sum_{i=1}^{\infty} \frac{1}{\left|\left(\frac{1+8k}{8n}\right)^2-i^2\right|} \right) \\
& \leq \mathcal{C}  \|f\|_\infty \, \left(n \log{n} + \sum_{k=1}^n \frac{1+8k}{4n} \left(
\sum_{i=1}^{\lfloor\frac{1+8k}{8n}\rfloor} \frac{1}{\left(\frac{1+8k}{8n}\right)^2-i^2}+ \sum_{i=\lfloor \frac{1+8k}{8n}\rfloor+1}^{\infty} \frac{1}{i^2-\left(\frac{1+8k}{8n}\right)^2} \right) \right) \\ & \leq \mathcal{C}  \|f\|_\infty \, n \log{n}
\end{align*}

Then, by combining the above relation with \eqref{S1} in \eqref{|Hn|}, we have the assertion. Estimate \eqref{boundantiHp} can be proved similarly.

%Let us now estimate the sum at the right-hand side. To this end, consider the trigonometric polynomial of degree $n$
%$$t_n(x)=\frac{a_0}{2}+\sum_{k=1}^n[a_k \cos{(kx) +b_k \sin{(k x)}}],$$
%which can be also written as \cite[Proposition 1.3.2]{MMlibro}
%$$t_n(x)= a_n \cos{(nx)} + \frac{\cos{nx}}{2n} \sum_{k=1}^n (-1)^k \cot{\left( \frac{x-\xi_k}{2}\right) t_n(\xi_k)},$$
%where $\xi_k=\frac{(2k-1)\pi}{2n}$ are $2n$ equidistant points in $(0,\pi)$. 
\end{proof}

\begin{theorem}\label{teo:stimaH}
Let $f \in C^0_{2\pi}.$ Then, for any $\phi \in \mathbb{T}$, we have
\begin{equation*}
|(Hf) (\phi)| \leq \mathcal{C} \left[\|f\|_{\infty}+\int_0^\pi \frac{\omega(f, u)}{u} d u\right],
\end{equation*}
where $\mathcal{C}$ denotes a positive constant independent of $f$.
\end{theorem}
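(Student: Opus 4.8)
The plan is to work from the regularized representation \eqref{Hf}, reduce it to a symmetric integral over $(0,\pi)$ by exploiting the oddness of the Hilbert kernel, and then separate the genuine $1/\psi$ singularity from a bounded remainder.

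First, writing $g(\theta)=f(e^{\ii\theta})$ and substituting $\psi=\theta-\phi$ in \eqref{Hf}, the $2\pi$-periodicity of $g$ and of $\cot(\psi/2)$ lets me integrate over $[-\pi,\pi]$ in the variable $\psi$. Since $\cot(\psi/2)$ is odd, folding the negative half onto the positive half produces the symmetric form
$$(Hf)(\phi)=\frac{1}{2\pi}\int_0^\pi \bigl[g(\phi+\psi)-g(\phi-\psi)\bigr]\cot\!\left(\frac{\psi}{2}\right)d\psi.$$
If the Dini integral on the right-hand side of the statement diverges there is nothing to prove, so I may assume it is finite; this guarantees absolute convergence near $\psi=0$, since the numerator is $O(\omega(f,\psi))$ there, and makes the folding and the subsequent splitting legitimate (otherwise the identity is read as a symmetric principal-value limit).

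Next I decompose the kernel as $\cot(\psi/2)=\frac{2}{\psi}+r(\psi)$ with $r(\psi):=\cot(\psi/2)-\frac{2}{\psi}$. Because $r$ extends continuously to $[0,\pi]$ with $r(0)=0$, it is bounded, say $|r(\psi)|\le M$. On the remainder piece I bound the symmetric difference crudely by $|g(\phi+\psi)-g(\phi-\psi)|\le 2\|f\|_\infty$, which yields a contribution $\le M\|f\|_\infty$; this is precisely the source of the $\|f\|_\infty$ term in the statement. On the principal piece I use the modulus-of-smoothness estimate $|g(\phi+\psi)-g(\phi-\psi)|\le 2\,\omega(f,\psi)$, obtained by inserting $\pm g(\phi)$ and invoking the $2\pi$-periodicity of $g$; dividing by $\psi$ and integrating gives a contribution controlled by $\frac{2}{\pi}\int_0^\pi \frac{\omega(f,\psi)}{\psi}\,d\psi$. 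Adding the two pieces and absorbing all absolute constants into $\mathcal{C}$ yields the claim.

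The \emph{main obstacle} is the justification of the symmetric representation and of the absolute convergence at $\psi=0$ for a merely continuous $f$: the integrand in \eqref{Hf} is not absolutely integrable unless the Dini integral converges, so the argument must either restrict to that case (the inequality being vacuous otherwise) or be phrased through the symmetric principal-value limit. A secondary technical point is that the estimate $|g(\phi+\psi)-g(\phi-\psi)|\le 2\,\omega(f,\psi)$ needs care near the endpoints of $\mathbb{T}$; this is handled by using the $2\pi$-periodic extension of $g$, for which the first-difference bound by $\omega(f,\cdot)$ holds at every argument.
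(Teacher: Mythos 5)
Your argument is correct, and it reaches the bound by a genuinely different decomposition than the paper's. The paper first replaces the Hilbert kernel by $1/(\theta-\phi)$ via the partial-fraction expansion $\tfrac12\cot(\theta/2)=\tfrac1\theta+\sum_{k\neq 0}(\tfrac{1}{\theta+2k\pi}+\tfrac{1}{2k\pi})$ (the tail contributing the $\|f\|_\infty$ term), then splits $[-\pi,\pi]$ into two outer pieces away from $\phi$, each bounded by $\|f\|_\infty$ times an $a$-dependent logarithm, and a symmetric middle piece which it folds and controls through the $K$-functional and the equivalence \eqref{estimateK}. You instead exploit periodicity at the outset to recenter and fold the \emph{entire} principal value into $\frac{1}{2\pi}\int_0^\pi[g(\phi+\psi)-g(\phi-\psi)]\cot(\psi/2)\,d\psi$, and then split the kernel as $\cot(\psi/2)=\tfrac{2}{\psi}+r(\psi)$ with $r$ bounded on $[0,\pi]$; the $r$-part gives the $\|f\|_\infty$ term and the $2/\psi$-part gives the Dini integral via the direct triangle-inequality bound $|g(\phi+\psi)-g(\phi-\psi)|\le 2\omega(f,\psi)$. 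Your route buys two things: it avoids the three-way splitting whose constant $\log\frac{2(a+\pi)}{\pi-a}$ degenerates as $\phi\to\pm\pi$ (an issue the paper leaves implicit, resolvable only by invoking periodicity anyway), and it bypasses the $K$-functional entirely. The one point to keep an eye on is the mismatch between the paper's restricted modulus (max over $x\in I_{h1}$) and the unrestricted periodic modulus your triangle inequality uses; you flag this and it is harmless for $2\pi$-periodic $f$, and the paper's own $K$-functional step relies on the same identification. Your remark that the inequality is vacuous when the Dini integral diverges, and that the folded integral is then absolutely convergent when it is finite, is a justification the paper omits.
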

\begin{proof}
Consider the definition \eqref{Hilbert}. Using the well-known identity
\[
\frac{1}{2} \cot \left(\frac{\theta}{2}\right)=\frac{1}{\theta}+\sum_{\substack{k=-\infty \\ k \neq 0}}^{\infty}\left(\frac{1}{\theta+2 k \pi}+\frac{1}{2 k \pi}\right),
\]
we can rewrite $Hf$ as
\[
\begin{aligned}
(Hf)(\phi) & =\frac{1}{2 \pi} \dashint_{-\pi}^{\pi}  \cot \left(\frac{\theta-\phi}{2}\right) \, f\left(e^{i \theta}\right) \, d \theta \\
& =\frac{1}{\pi} \dashint_{-\pi}^{\pi} \frac{f\left(e^{i \theta}\right)}{\theta-\phi} d \theta+\frac{1}{\pi}  \dashint_{-\pi}^{\pi} f\left(e^{i \theta}\right) \sum_{\substack{k=-\infty \\ k \neq 0}}^{\infty} \left(\frac{1}{\theta-\phi+2 k \pi}+\frac{1}{2 k \pi}\right)   d \theta.
\end{aligned}
\]
The second integrand is bounded and continuous, and its integral is bounded in absolute value by $M \| f \|_\infty$ according to Holder's inequality, where $M$ is a constant independent of $f$. Hence, we focus on

\[ 
(Jf)(\phi)=\frac{1}{\pi}\dashint_{-\pi}^{\pi} \frac{f(e^ {\ii \theta})}{\theta-\phi} d \theta.
\]

Let $a>0$ such that $-\pi<-a<\phi<a<\pi$, fix $\epsilon=\frac{\pi-a}{2}$ and let us write 
\begin{align*}
\frac{1}{\pi}\dashint_{-\pi}^{\pi} \frac{f(e^ {\ii \theta})}{\theta-\phi} d \theta&= \frac{1}{\pi} \left[ \dashint_{-\pi}^{\phi-\epsilon}  \frac{f(e^ {\ii \theta})}{\theta-\phi} d \theta +\dashint_{\phi-\epsilon}^{\phi+\epsilon}  \frac{f(e^ {\ii \theta})}{\theta-\phi} d \theta +\dashint_{\phi+\epsilon}^{\pi} \frac{f(e^ {\ii \theta})}{\theta-\phi} d \theta \right] \\ & = I_1+I_2+I_3.
\end{align*}
Concerning the first integral, we have
\begin{align}\label{I1bis}
|I_1| \leq \|f\|_\infty \int_{-\pi}^{\phi-\epsilon} \frac{d \theta}{\phi-\theta} = \|f\|_\infty \log{\left(\frac{\phi+\pi}{\epsilon}\right)} \leq \|f\|_\infty  \log{\left(\frac{2(a+\pi)}{\pi-a}\right)},
\end{align}
and a similar estimate holds true for the third integral
\begin{align}\label{I3bis}
|I_3| \leq \|f\|_\infty \int_{\phi+\epsilon}^\pi \frac{d \theta}{\phi-\theta} = \|f\|_\infty \log{\left(\frac{\pi-\phi}{\epsilon}\right)} \leq \|f\|_\infty  \log{\left(\frac{2(a+\pi)}{\pi-a}\right)}.
\end{align}
Let us now consider $I_2$ and the following identity
\begin{align}\label{I2_2}
I_2= \dashint_{\phi-\epsilon}^{\phi}  \frac{f(e^ {\ii \theta})}{\theta-\phi} d \theta + \dashint_{\phi}^{\phi+\epsilon}  \frac{f(e^ {\ii \theta})}{\theta-\phi} d \theta = \int_0^\epsilon \frac{f(e^{\ii(\phi+u)})-f(e^{\ii(\phi-u)})}{u} du.
\end{align}
Introduce a function $g \in W^1$ and write
\begin{align*}
|f(e^{\ii(\phi+u)})-f(e^{\ii(\phi-u)})|&=|(f-g)(e^{\ii(\phi+u)})-(f-g)(e^{\ii(\phi-u)})+g(e^{\ii(\phi+u)})-g(e^{\ii(\phi-u)})| \\ &
\leq \|f-g\|_\infty+ \int_{\phi-u}^{\phi+u} |g'(e^{\ii y})| dy \\ & \leq \|f-g\|_\infty + u \|g'\|_\infty. 
\end{align*} 
Then, taking the infimum on $g \in W^1$ and by applying \eqref{estimateK}, we have
\begin{equation*} 
|f(e^{\ii(\phi+u)})-f(e^{\ii(\phi-u)})|  \leq \mathcal{C} K(f,u) \leq  \mathcal{C} \omega(f,u),
\end{equation*} 
from which by \eqref{I2_2} we get
\begin{equation}\label{I2bis}
|I_2| \leq \mathcal{C}\int_0^\pi \frac{\omega(f,u) }{u} du.
\end{equation}
Therefore, by combining \eqref{I1bis}, \eqref{I2bis}, and \eqref{I3bis}, we get the assertion.

%With the change of variable \( y = \frac{\theta-\varphi}{\pi} \), we have 
%
%\[
%\frac{1}{\pi} \int_{\varphi-\pi}^{\varphi+\pi} \frac{f\left(e^{i \theta}\right)}{\theta-\varphi}=\frac{1}{\pi} \int_{-1}^1 \frac{f\left(e^{i(\pi y+\varphi)}\right)}{y} d y.
%\]
%
%Applying Proposition 2.2 by Capobianco, we obtain
%
%\[ 
%\left|K(f, \varphi)\right| \leq D\left[\int_0^1 \frac{\omega_{\psi}(f\left(e^{i(\pi u+\varphi)}\right), u)}{u} d u+\|f\|_{\infty}\right] 
%\]
%
%for some \( D \) independent of \( f \) and \( \varphi \), and 
%\begin{equation}
%    \psi(x) = \sqrt{1 - x^2} \text{ , } \Delta_{h \psi} f(x) = f\left(x + \frac{h}{2} \psi(x)\right) - f\left(x - \frac{h}{2} \psi(x)\right),
%\end{equation},
%and 
%\begin{equation}
%    \omega_{\varphi}(f; t) = \sup_{h \leq t} \max_{|x| \leq 1} |\Delta_{h \varphi} f(x)|
%\end{equation}
%Given that ${\omega_{\psi}(f)} \leq \omega(f)$, by coming back to the original coordinates and utilizing the periodic nature of $f$, we obtain
%\begin{equation}
%H(f, \varphi) \leq C\left[\|f\|_{\infty}+\int_0^\pi \frac{\omega(f, u)}{u} d u\right],
%\end{equation}
\end{proof}

\begin{theorem}\label{error}
Let \( \phi \in \mathbb{T} \) be fixed and $f\in C^0_{2 \pi}$. Then, the quadrature error of the Szeg\"o rule defined in \eqref{Hnp} satisfies the following estimate 
\[
\left|e^{\textrm{pr}}_n(f, \phi)\right| \leq C\left[\log n \, E_n(f)+\int_0^{\pi/n} \frac{\omega(f, u)}{u} d u\right],
\]
where $\mathcal{C}$ is a positive constant independent of $f$ and $n$.
\end{theorem}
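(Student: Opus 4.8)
The plan is to read the quadrature error as the error of the prescribed-node Szeg\"o rule applied to the desingularized integrand. Writing $z=e^{\ii\theta}$, $w=e^{\ii\phi}$ and $F_\phi(\theta)=\dfrac{f(z)-f(w)}{\tan((\theta-\phi)/2)}$, formula \eqref{Hf} gives $(Hf)(\phi)=\mathcal I(F_\phi)$, while by \eqref{Hnp} $(H^{\mathrm{pr}}_nf)(\phi)=S_n(F_\phi)$ with nodes fixed by the prescribed node $\theta_p=\phi+\pi/(4n)$; hence $e^{\mathrm{pr}}_n(f,\phi)=\mathcal I(F_\phi)-S_n(F_\phi)$. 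I would then split the circle into a small arc $N=\{\theta:|\theta-\phi|\le c\pi/n\}$ around the singularity and its complement $F$, splitting both the integral and the sum, so that $e^{\mathrm{pr}}_n(f,\phi)=E_N+E_F$, and estimate the two contributions by entirely different means.

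For the near part $E_N$ I would bound the integral and the sum separately, directly in terms of $f$. Using $|f(z)-f(w)|\le\omega(f,|\theta-\phi|)$ and $|\tan((\theta-\phi)/2)|\ge c\,|\theta-\phi|$, the integral piece is bounded by $\mathcal C\int_0^{\pi/n}\frac{\omega(f,u)}{u}\,du$, reusing the pointwise analysis behind Theorem \ref{teo:stimaH} (the identity \eqref{I2_2} and the $K$-functional bound \eqref{estimateK}). The crucial point is the choice $\theta_p=\phi+\pi/(4n)$: by construction every node satisfies $|\theta_k-\phi|\ge\pi/(4n)$, so $N$ contains only $O(1)$ nodes, each with denominator $\gtrsim 1/n$, and the corresponding sum piece is $\lesssim\omega(f,\pi/(4n))\le\mathcal C\int_0^{\pi/n}\frac{\omega(f,u)}{u}\,du$. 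This is exactly where the integral term of the statement is produced.

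For the far part I would subtract the best approximant. Let $P=L^*_{n-1}\in\Lambda_{-(n-1),n-1}$ with $\|f-P\|_\infty=E_{n-1}(f)$ and $g=f-P$, and write $F_\phi=F_\phi^{P}+G_\phi$. The $g$-contribution to $E_F$ is harmless: since $|\theta-\phi|\gtrsim 1/n$ on $F$, both its integral and its sum are controlled by $\|g\|_\infty$ times $\int_{\pi/n}^\pi du/u\asymp\log n$ and by $\frac1n\sum_{\theta_k\in F}1/|\theta_k-\phi|\asymp\log n$ (this last estimate being the one underlying Theorem \ref{teo:stabilityHn}), giving $\lesssim\log n\,E_{n-1}(f)$. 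The $P$-contribution is handled by exactness: because $\cot((\theta-\phi)/2)=\ii(z+w)/(z-w)$, the function $F_\phi^{P}$ is a Laurent polynomial in $z$ of degree at most $n-1$, so by \eqref{exactness} the full quadrature error of $F_\phi^{P}$ vanishes; consequently its far error equals minus its near error, which by $\omega(P,u)\le u\|P'\|_\infty$ is bounded by $\mathcal C\,n^{-1}\|L^{*\prime}_{n-1}\|_\infty$.

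The main obstacle is this last quantity $n^{-1}\|L^{*\prime}_{n-1}\|_\infty$, the near-singularity cost of the subtracted polynomial, which must be absorbed into $\mathcal C[\log n\,E_n(f)+\int_0^{\pi/n}\omega(f,u)/u\,du]$. I would obtain it from a Bernstein-type bound on the derivative of the best approximant: decomposing $P$ into dyadic blocks $L^*_{2^{i}}-L^*_{2^{i-1}}$ and applying Bernstein's inequality to each gives $\|L^{*\prime}_{n-1}\|_\infty\le\mathcal C\sum_{2^{i}\le n}2^{i}E_{2^{i-1}}(f)$, whence, by the monotonicity of $E_n(f)$, $n^{-1}\|L^{*\prime}_{n-1}\|_\infty\le\mathcal C\,n^{-1}\sum_{\mu\le n}E_\mu(f)$. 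The delicate step is a summation estimate showing that this head sum is dominated by $\log n\,E_n(f)$ together with the fine-scale integral $\int_0^{\pi/n}\omega(f,u)/u\,du$; here the coarse-scale information carried by $\|P'\|_\infty$ must be balanced simultaneously against the two fine-scale quantities, the two regimes (rough $f$, where $\log n\,E_n(f)$ dominates, and smooth $f$, where the integral dominates) requiring Jackson's theorem and the subadditivity and monotonicity of $\omega$. Collecting $E_N$ and $E_F$, and matching the exactness-natural index $E_{n-1}(f)$ to the $E_n(f)$ of the statement exactly as in Theorem \ref{stimaerrore}, yields the claimed estimate.
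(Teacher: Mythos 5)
Your overall architecture is essentially the paper's proof unpacked: you subtract the best approximant $P$, use exactness of the Szeg\"o rule on the desingularized polynomial part $\ii(z+w)\frac{P(z)-P(w)}{z-w}\in\Lambda_{-(n-1),n-1}$, control the $f-P$ contribution by a $\log n$ stability count (this is Theorem \ref{teo:stabilityHn}) and the near-singularity contribution by the modulus-of-continuity argument behind Theorem \ref{teo:stimaH}. Up to and including the reduction of the polynomial's near-singularity cost to $n^{-1}\|P'\|_\infty$, everything is sound and matches the paper (which reaches the same quantity via $\omega(Q,u)\le u\|Q'\|_\infty$ inside $\int_0^{\pi/n}\omega(f-Q,u)u^{-1}du$).

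The gap is in your last step, and it is not just unproven but unrecoverable along the route you chose. The dyadic Bernstein bound $\|P'\|_\infty\le\mathcal{C}\sum_{2^i\le n}2^iE_{2^{i-1}}(f)$ is genuinely lossy, and the resulting ``delicate'' summation claim $n^{-1}\sum_{\mu\le n}E_\mu(f)\le\mathcal{C}\,[\log n\,E_n(f)+\int_0^{\pi/n}\omega(f,u)u^{-1}du]$ is false. Take $f=V_M(|\cdot|)$, a de la Vall\'ee Poussin mean of the periodized $|\theta|$, so that $\|f'\|_\infty\le\mathcal{C}$, $E_\mu(f)\asymp1/\mu$ for $\mu\le cM$, and $E_\mu(f)=0$ for $\mu\ge M$; with $n=M^2$ the left-hand side is $\asymp M^{-2}\log M$ while the right-hand side is $\asymp M^{-2}$ (since $E_n(f)=0$ and $\omega(f,u)\le\mathcal{C}u$), so the ratio grows like $\log n$. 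The cure is to bound $n^{-1}\|P'\|_\infty$ directly rather than through the $E_\mu$'s: comparing $P$ with a Jackson-type near-best approximant $T$ satisfying $\|f-T\|_\infty\le\mathcal{C}\omega(f,1/n)$ and $\|T'\|_\infty\le\mathcal{C}n\,\omega(f,1/n)$, Bernstein's inequality applied to $P-T$ gives $n^{-1}\|P'\|_\infty\le\mathcal{C}\omega(f,1/n)$ --- this is exactly the $K$-functional step \eqref{estimateK} in the paper --- and then $\omega(f,1/n)\le4\int_{1/(2n)}^{1/n}\omega(f,t)t^{-1}dt$ absorbs this term into the fine-scale integral of the statement. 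With that replacement your argument closes.
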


\begin{proof}
Let \( Q \in \Lambda_{-n-1,n-1} \) be the Laurent polynomial of best approximation for $f$. Then, by \eqref{exactness} we have 
\[
\begin{aligned}
|e_n(f, \phi)|  =|e_n(f-Q, \phi)| 
& \leq\left|\int_{-\pi}^\pi (f-Q) (e^{i \theta})  \cot \left(\frac{\theta-\phi}{2}\right) d \theta\right|+\left|H^{\textrm{pr}}_n(f-Q, \phi)\right|,
\end{aligned}
\]
and, by Theorem \ref{teo:stabilityHn} and Theorem \ref{teo:stimaH}, we obtain  
\[
\left|e_n(f, \phi)\right| \leq \mathcal{C}\left[\|f-Q\|_{\infty}+\int_0^\pi \frac{\omega(f-Q, u)}{u} d u + \log n \, E_n(f)\right].
\]
Now, by the linearity of the modulus of smoothness, we have

\begin{align}\label{omegafQ}
\int_0^\pi \frac{\omega(f-Q, u)}{u} d u & \leq  \int_0^{\pi/n} \frac{\omega(f-Q, u)}{u} d u + \|f-Q\|_{\infty} \int_{\pi/n}^\pi \frac{d u}{u} \nonumber \\
& \leq \mathcal{C} \left[  \int_0^{\pi/n} \frac{\omega(f, u)}{u} d u+\int_0^{\pi/n} \frac{\omega(Q,u)}{u} d u + \log n \|f-Q\|_{\infty} \right] \nonumber \\
& \leq \mathcal{C} \left[  \int_0^{\pi/n} \frac{\omega(f, u)}{u} d u+\frac{\pi}{n} \|Q'\|_\infty + \log n \|f-Q\|_{\infty} \right],
\end{align}
where the $\omega(Q,u)$ has been estimated by using the first inequality of \eqref{estimateK} and the definition of the $K$-functional.
Then, by using the second inequality of \eqref{estimateK}, we can also write
$$\frac{\pi}{n} \|Q'\|_\infty \leq \mathcal{C} \omega\left(f,\frac{1}{n}\right),$$
and being
$$1= \frac{1}{n} \int_{1/2n}^{1/n} \frac{dt}{t^2} \leq 2 \int_{1/2n}^{1/n} \frac{dt}{t},$$
we can deduce
$$ \frac{\pi}{n} \|Q'\|_\infty \leq \mathcal{C} \omega\left(f,\frac{1}{n}\right) \leq 2 \omega\left(f,\frac{1}{2n}\right) \leq 4 \int_{1/2n}^{1/n} \frac{\omega(f,t)}{t} dt.$$

By replacing the last estimate in \eqref{omegafQ}  we get the assertion.
\end{proof}

\begin{remark}
Note that the same error estimate can be deduced for the anti-Szeg\"o rule \eqref{antiHnp}, by virtue of Theorem \ref{teo:stabilityHn} and Theorem \ref{teo:stimaH}. A further consequence of Theorem \ref{error} is that if $\phi \in \mathbb{T}$ is fixed, for any function $f$ such that  
$$\int_0^{\pi/n} \frac{\omega(f,u)}{u} du < \infty,$$ 
we have convergence. Moreover, if $f \in W^r$, then the error behaves as $\mathcal{O}\left(\frac{\log n}{n^r} \right).$ 
We remark that this is the optimal order of convergence we can obtain. In fact, according to \cite{stolle1992}, there exist a positive constant $\mathcal{C}$ (depending only on the parameter $r$) such that for every $n$-point quadrature formula $Q_n$ based on the subtraction of the singularity and approximating a Cauchy singular integral,   the following estimate holds true 
$$\sup_{\phi \in \mathbb{T}}  \varrho_r(e_n(\phi))  \geq \mathcal{C} \frac{\log n}{n^r},$$
where $\varrho_r(e_n(\phi)) = \sup  \{|e_n(f, \phi)| : f \in W^r\} $ is the Peano constant of order $r$ related to the quadrature $Q_n$.
\end{remark}

\section{Numerical tests}\label{sec:tests}
This section is designed to present several numerical experiments that   illustrate the performance of our algorithm. 
In each example, we first write the integral \eqref{Cauchy} as 
$$(If)(z)=(Hf)(z)+\ii \, \mathcal{I}(f),$$
where $H$ and $\mathcal{I}$ are given in \eqref{Hilbert} and \eqref{I}, respectively. Then, for the circular Hilbert transform we
evaluate the sums \eqref{Hnp}, \eqref{antiHnp}, and \eqref{mediaHnp}  in $10^2$ equidistant points in $\phi_i \in [-\pi,\pi]$ and compute the discrete errors
\begin{align}\label{errors}
\epsilon_n(f) &= \|Hf-H^{\textrm{pr}}_nf\|, \qquad 
\tilde{\epsilon}_n(f) = \|Hf-\tilde{H}^{\textrm{pr}}_nf\|, \qquad 
\hat{\epsilon}_n(f) =  \|Hf -\hat{H}^{\textrm{pr}}_{n}f\|,
\end{align}
where here $\|\cdot\|$ denotes the discrete infinity norm and  $Hf$ is the exact value of the integral. For the second integral, we approximate it with the Szeg\"o and anti-Szeg\"o rule defined in \eqref{szego} and \eqref{anti-szego}, and evaluate the errors
\begin{align}\label{errorsI}
e_n(f) &= \mathcal{I}f-S_nf, \qquad 
\tilde{e}_n(f) = \mathcal{I}f-\tilde{S}_nf, \qquad 
\hat{e}_n(f) =  \mathcal{I}f -\hat{S}_{n}f.
\end{align}

When the exact values of the integrals are not known,  we consider the exact value the one obtained with a large value of $n$ that we fix case by case. We point out that this choice does not affect the results since  our quadrature schemes are stable and convergent.

All the computed examples were carried out in  Matlab R2023b in double precision  on an Intel Core i7-2600 system (8 cores), under the Debian GNU/Linux operating system.

\begin{example}\label{test0}
\rm
Let us start by considering again the integral of Example \ref{example1}, that is the circular Hilbert transform of the function $f_0(\phi)=e^{2 \cos{\phi}}$. 
To make a comparison with Table \ref{tab:example1}, we have computed the values of the integral in $\phi=\pi/16, \pi/32$. Table \ref{tab:test0}  contains such values. It is deduced that the algorithm with the prescribed node avoids instability. Moreover, by Table \ref{tab:test0bis} which reports  the errors \eqref{errors} considering as exact the approximated value with $n=128$, one can infer that the averaged rule furnishes very accurate results. 
\begin{table}[ht]
\caption{Numerical results for Example \ref{test0}.}
\label{tab:test0}
\begin{center}
\begin{tabular}{c | c c c| c c c}
$n$ & $\phi$ & $(H_nf_0)(\phi) $ & $ (\tilde{H}_nf_0)(\phi)$   \\
\hline 
  4 & $\frac{\pi}{16}$	  & -1.622605841221501e+00 	  & -1.329104147077534e+00  \\
 8 &	  & -1.475904319788829e+00 	  & -1.475811478259103e+00  \\
 16 &	  & -1.475857899023998e+00 	  & -1.475857899024163e+00 \\
 32 &	  & -1.475857899024072e+00 	  & -1.475857899024079e+00 \\
 64 &	  & -1.475857899024066e+00 	  & -1.475857899024052e+00 \\
 128 &	  & -1.475857899024082e+00 	  & -1.475857899024035e+00 \\
 256 	&  & -1.475857899024136e+00 	  & -1.475857899024142e+00 \\
\hline
  4 &$\frac{\pi}{32}$ 	  & -8.930293238806029e-01 	  & -6.157479708830708e-01 \\
 8 	&  & -7.544098378965085e-01 	  & -7.542722106421451e-01 \\
 16 	&  & -7.543410242694044e-01 	  & -7.543410242692503e-01 \\
 32 &	  & -7.543410242693269e-01 	  & -7.543410242693250e-01 \\
 64 	&  & -7.543410242693245e-01 	  & -7.543410242693196e-01 \\
 128 &	  & -7.543410242692896e-01 	  & -7.543410242693149e-01 \\
 256 &	  & -7.543410242693608e-01 	  & -7.543410242692843e-01 \\
\end{tabular}
\end{center}
\end{table}

\begin{table}[ht]
\caption{Numerical results for Example \ref{test0}.}
\label{tab:test0bis}
\begin{center}
\begin{tabular}{c | c c c}
$n$  & $\epsilon_n(f) $ & $\tilde{\epsilon}_n(f) $ & $\hat{\epsilon}(f)$ \\
\hline 
  4  & 1.47e-01 	  & 1.47e-01 	  & 6.66e-05 \\
 8  & 6.88e-05 	  & 6.88e-05 	  & 2.02e-13 	\\
 16 & 1.77e-13 	  & 1.41e-13 	  & 9.57e-14\\
\end{tabular}
\end{center}
\end{table}
\end{example}

\begin{example}\label{test1}
\rm
Let us evaluate the following integral
\begin{equation*}
I(z)=\frac{1}{\pi} \dashint_{\Gamma} \frac{\ln(1+\cos{t}+\sin^2(t/2))}{t-z} \, dt, \qquad z \in \Gamma.
\end{equation*}
Setting $f_1(e^{\ii \theta})= \ln{\left(\frac{3}{2}+\frac{e^{i \theta}}{4}+\frac{e^{-i\theta}}{4}\right)}$ and according to \eqref{If2} it can be rewritten as
\begin{equation*}
(If_1)(e^{\ii \phi})=\frac{1}{2 \pi} \dashint_{-\pi}^\pi  
\cot{\left(\frac{\theta-\phi}{2} \right)} f_1(e^{\ii \theta})  d \theta + \frac{\ii}{2\pi} \int_{-\pi}^\pi  f_1(e^{\ii \theta}) d \theta:=(Hf_1)(e^{\ii \phi})+ \ii \, \mathcal{I}(f_1).
\end{equation*}
Table \ref{tab:test1} reports the absolute errors \eqref{errors} and \eqref{errorsI} we get for increasing value of $n$ and by considering as exact value the one obtained with $n=64$. In Figure \ref{fig:sol_errors}, we display the errors provided by the Szego and anti-Szego formula for the Hilbert transform when $n=4$. It is evident that the errors have opposite sign. This fact allows us to gain accuracy with the average rule almost reaching the machine precision with $n=8$ .
\begin{table}[ht!]
\caption{Numerical results for Example \ref{test1}.}
\label{tab:test1}
\begin{center}
\begin{tabular}{c | c c c | c c c }
$n$ & $\epsilon_n(f_1) $ & $\tilde{\epsilon}_n(f_1) $ & $\hat{\epsilon}_n(f_1)$ & $e_n(f_1) $ & $\tilde{e}_n(f_1) $ & $\hat{e}_n(f_1)$\\
\hline
4 	  & 5.69e-04 	  & 5.69e-04 	  & 2.55e-07 &  4.33e-04 	& -4.33e-04&  	 -1.88e-07 	\\
8 	  & 2.47e-07 	  & 2.47e-07 	  & 9.84e-14 & 1.88e-07  & 	 -1.88e-07 &	 -7.07e-14 	\\
16 	  & 9.52e-14 	  & 9.53e-14 	  & 4.91e-15 & 7.13e-14 	& -7.01e-14 	& 6.11e-16 	 \\
\end{tabular}
\end{center}
\end{table}
\begin{figure}[ht!]
\centering
\includegraphics[width=0.48\textwidth]{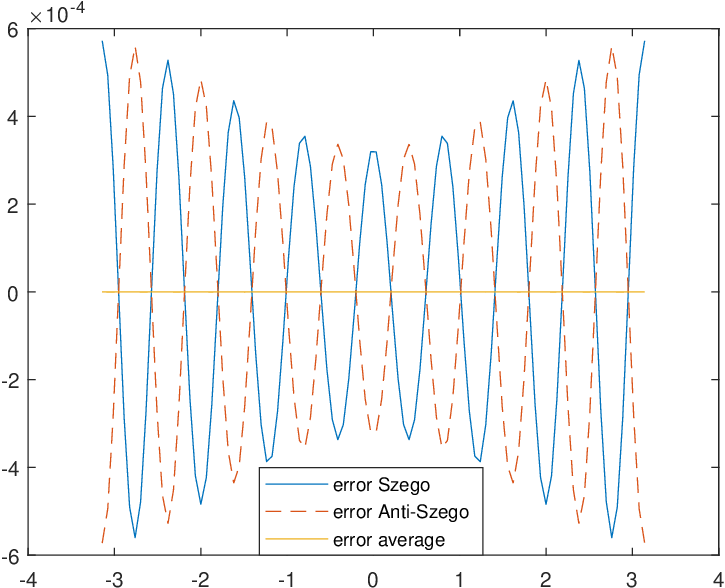}
\caption{Graph of the errors for Example~\ref{test1} with $n=4$.}
\label{fig:sol_errors}
\end{figure}
\end{example}
\begin{example}\label{test2}
\rm
Let us approximate the following integral
\begin{equation*}
\frac{1}{2\pi} \dashint_{\Gamma} \frac{\ln(5+4\cos{t})}{t-z} \, dt, \qquad z \in \Gamma,
\end{equation*}
or equivalently 
\begin{equation*}
(If_2)(e^{\ii \phi})=\frac{1}{2 \pi} \int_{-\pi}^\pi  
\cot{\left(\frac{\theta-\phi}{2} \right)} f_2(e^{\ii \theta})  d \theta  + \frac{\ii}{2\pi} \int_{-\pi}^\pi  f_2(e^{\ii \theta}) d \theta,
\end{equation*}
with $f_2(e^{\ii \theta})=\ln{\left(5+2{e^{i \theta}+2e^{-i\theta}}\right)} $. Table \ref{tab:test3} reports not only the discrete errors but also an estimate of the error provided by the approximation given by the Szeg\"o rule as in \eqref{estimate1} and \eqref{estimate2}. By comparing the fifth column with the second one and the last column with the sixth one, we can deduce that the couple Szeg\"o and anti-Szeg\"o rule provide a practical good way the get such estimate.  
\begin{table}[ht!]
\caption{Numerical results for Example \ref{test2}.}
\label{tab:test3}
\begin{center}
\begin{tabular}{c | c c c c | c c c c}
$n$ & $\epsilon_n(f_2) $ & $\tilde{\epsilon}_n(f_2) $ & $\hat{\epsilon}_n(f_2)$ & $r_n(f_2)$ & $e_n(f_2)$ & $\tilde{e}_n(f_2) $ & $\hat{e}_n(f_2)$ & $R_n(f_2)$\\ 
\hline
4 	  & 3.69e-02 	  & 3.60e-02 	  & 1.28e-03 	 & 3.64e-02  & 1.52e-02 	& -1.61e-02 	 &-4.89e-04 	 &-1.56e-02 \\
8 	  & 1.25e-03 	  & 1.25e-03 	  & 2.66e-06 	 & 1.25e-03 & 4.87e-04 	& -4.89e-04 	 &-9.54e-07 	 &-4.88e-04 \\
16 	  & 2.58e-06 	  & 2.58e-06 	  & 2.10e-11 	 & 2.58e-06 & 9.54e-07 	& -9.54e-07 	 &-7.28e-12 	 &-9.54e-07\\
32 	  & 2.03e-11 	  & 2.03e-11 	  & 3.45e-14 	 & 2.03e-11 & 7.28e-12 	& -7.28e-12 	 &2.22e-16 	 &-7.28e-12\\
\end{tabular}
\end{center}
\end{table}
\end{example}
\begin{example}\label{test3}
\rm
Let us consider now an example in which the known periodic function is not smooth
\begin{align*}
(If_3)(z)&=\frac{1}{2\pi} \dashint_{\Gamma} \frac{|1+\cos{t}|^{\frac{5}{2}}}{t-z} \, dt = \frac{1}{2 \pi} \int_{-\pi}^\pi  
\cot{\left(\frac{\theta-\phi}{2} \right)} f_3(e^{\ii \theta})  d \theta  + \frac{\ii}{2\pi} \int_{-\pi}^\pi  f_3(e^{\ii \theta}) d \theta,
\end{align*}
where $f_3(e^{\ii \theta})=|1+\frac{e^{\ii \theta}}{2}+\frac{1}{2e^{\ii \theta}}|^{5/2}$.
As we can see by Table \ref{tab:test4} the convergence is not so fast as in the previous example. This is due to the precence of a function of class $W^2$. However, also in this case we can see that the averaged rule improves the accuracy.   

\begin{table}[ht!]
\caption{Numerical results for Example \ref{test3}.}
\label{tab:test4}
\begin{center}
\begin{tabular}{c | c c c c | c c c c}
$n$ & $\epsilon_n(f_3) $ & $\tilde{\epsilon}_n(f_3) $ & $\hat{\epsilon}_n(f_3)$ & $r_n(f_3)$ & $e_n(f_3)$ & $\tilde{e}_n(f_3) $ & $\hat{e}_n(f_3)$ & $R_n(f_3)$\\ 
\hline
4 	  & 9.97e-03 	  & 1.00e-02 	  & 1.89e-04 	 & 9.97e-03 &  6.34e-03 	& -6.46e-03 	 &-6.03e-05 	 &-6.40e-03 \\
8 	  & 1.86e-04 	  & 1.83e-04 	  & 5.31e-06 	 & 1.85e-04  & 5.86e-05 	& -6.03e-05 	 &-8.47e-07 	 &-5.95e-05 \\
16 	  & 4.64e-06 	  & 4.67e-06 	  & 1.62e-07 	 & 4.64e-06 & 8.21e-07 	& -8.47e-07 	 &-1.29e-08 	 &-8.34e-07 \\
32 	  & 1.41e-07 	  & 1.41e-07 	  & 5.02e-09 	 & 1.41e-07 & 1.25e-08 	& -1.29e-08 	 &-2.02e-10 	 &-1.27e-08 \\
64 	  & 3.73e-09 	  & 3.42e-09 	  & 1.57e-10 	 & 3.58e-09 & 1.92e-10 	& -2.02e-10 	 &-4.74e-12 	 &-1.97e-10 \\
128 	  & 1.16e-10 	  & 1.07e-10 	  & 4.79e-12 	 & 1.12e-10 & 1.55e-12 	& -4.67e-12 	 &-1.56e-12 	 &-3.11e-12 \\
256 	  & 3.53e-12 	  & 3.51e-12 	  & 8.11e-13 	 & 3.49e-12 & -1.20e-12 	& -1.36e-12 	 &-1.28e-12 	 &-8.13e-14 
\end{tabular}
\end{center}
\end{table}
\end{example}
\begin{example}\label{test4}
\rm
Let us test our quadrature schemes  on the following integral
\begin{equation*}
(If_4)(z)=\frac{1}{2\pi} \dashint_{\Gamma} \frac{|\sin{t}|^{\frac{7}{2}}}{t-z} \, dt, \qquad z \in \Gamma,
\end{equation*}
where $f_4(e^{\ii \theta})= |\frac{e^{\ii \theta}-e^{-\ii \theta}}{2\ii} |^{7/2} \in W^3$.
Table \ref{tab:test5} contains our numerical results which are better than the theoretical expectation.  

\begin{table}[ht!]
\caption{Numerical results for Example \ref{test4}.}
\label{tab:test5}
\begin{center}
\begin{tabular}{c | c c c| c c c}
$n$ & $\epsilon_n(f_4) $ & $\tilde{\epsilon}_n(f_4) $ & $\hat{\epsilon}_n(f_4)$  & $e_n(f_4)$ & $\tilde{e}_n(f_4) $ & $\hat{e}_n(f_4)$  \\ 
\hline
8 	  & 2.55e-03 	  & 2.48e-03 	  & 1.90e-04 & -1.12e-03 	& 1.21e-03 	 &4.45e-05 \\
16 	  & 1.86e-04 	  & 1.81e-04 	  & 1.61e-05 &  -4.08e-05 	& 4.45e-05 	 &1.89e-06 \\
32 	  & 1.32e-05 	  & 1.34e-05 	  & 1.40e-06 & -1.72e-06 	& 1.89e-06 	 &8.26e-08 	\\
64 	  & 1.15e-06 	  & 1.08e-06 	  & 1.23e-07 & -7.53e-08 	& 8.26e-08 	 &3.65e-09 \\
128 	  & 1.01e-07 	  & 8.06e-08 	  & 1.01e-08 & -3.31e-09 	& 3.65e-09 	 &1.66e-10 \\
256 	  & 8.19e-09 	  & 7.85e-09 	  & 1.73e-10 & -1.41e-10 	& 1.67e-10 	 &1.29e-11 
\end{tabular}
\end{center}
\end{table}
\end{example}

\section{Conclusions}\label{sec:conclusions}
In this paper, we have proposed quadrature rules of Szeg\"o and anti-Szeg\"o type for the approximation of the Hilbert transform defined on the unit circle. The schemes are  suitable constructed to avoid that the quadrature nodes coincide or are very close to the singularity. An averaged rule is also proposed allowing for better accuracy and a reduction of the computational cost with respect to the native formulae.

As future perspective of research, we think that we can extend the same procedure to the case when other measures appear in the Hilbert transform. We also believe that these quadrature rules can be applied to the numerical solution of Cauchy integral equations define on the unit circle and currently unexplored. Finally, a specific spectral analysis of the matrices and matrix-sequences considered in this work is a topic for future investigation, together with fast accurate eigenvalue solvers in the spirit of \cite{bogoya2024fast}.

\section*{Acknowledgments}
The authors are members of the Gruppo Nazionale Calcolo Scientifico-Istituto Nazionale di Alta Matematica (GNCS-INdAM) and are partially supported by the INdAM-GNCS 2024 project ``Algebra lineare numerica per problemi di grandi dimensioni: aspetti teorici e applicazioni''.
Luisa Fermo is also a member of the TAA-UMI Research Group and is partially supported by the PRIN 2022 PNRR project no. P20229RMLB financed by the
European Union - NextGeneration EU and by the Italian Ministry of University and Research (MUR).
This research has been accomplished within “Research ITalian network on Approximation” (RITA).
\bibliographystyle{plain}      % mathematics and physical sciences
\bibliography{biblio}
\end{document}